\newcommand{\vey}{y}
\newcommand{\veu}{u}
\newcommand{\vev}{v}
\newcommand{\si}{\sigma}
\newcommand{\di}{\delta}
\newcommand{\R}{\mathbb{R}}
\newcommand{\crit}{\mu}
\newcommand{\TP}{\text{TP}(u,v)}
\newcommand{\se}{e} 
\newcommand{\sealg}{\se} 
\newcommand{\suc}{C'} 
\begin{document}

\title{The Diameters of  Network-flow Polytopes satisfy the Hirsch conjecture}
\author{S. Borgwardt\inst{1} \and J. A. De Loera\inst{2} \and E. Finhold\inst{3}}

\pagestyle{plain}

\institute{University of Colorado, Denver \and University of California, Davis \and Fraunhofer-Institut f\"ur Techno- und Wirtschaftsmathematik ITWM, Kaiserslautern
}

\maketitle

\begin{abstract}
\noindent We solve a problem in the combinatorics of polyhedra motivated by the network simplex method. We show 
that the Hirsch conjecture holds for the diameter of the graphs of all network-flow polytopes, in particular
the diameter of a network-flow polytope for a network with $n$ nodes and $m$ arcs is never more than 
$m+n-1$. A key step to prove this is to show the same result for classical transportation polytopes.

\end{abstract}

\noindent{\bf Keywords:} Combinatorial diameter, diameter of graphs of polyhedra, Hirsch conjecture, simplex method, network simplex method, network-flow polytopes, transportation polytopes.\\\\
\noindent{\bf MSC[2012]:} 52B05, 90C05, 90C08 

\section{Introduction}

Every convex polyhedron has an underlying graph or $1$-skeleton (defined 
by the zero- and one-dimensional faces). The distance between two zero-dimensional faces (vertices) of the polyhedron
is the length of a shortest path between them. The \emph{combinatorial diameter} of a polyhedron is the maximum possible 
distance between a pair of its vertices. Motivated by the study of the worst-case performance of the simplex algorithm to solve 
linear optimization problems, researchers have considered the geometric challenge of deciding what is the largest possible 
(combinatorial) graph  diameter of convex polytopes with given number of facets and dimension (see \cite{DeLoeraOptima,KimSantos10}
 for an overview about this problem and its implications in optimization theory).

One of the most famous conjectures associated with the diameter is the \emph{Hirsch conjecture}, stated in 1957 by Warren M{.} 
Hirsch \cite{Dantzig63}. It claimed an upper bound of $f-d$ on the (combinatorial) diameter of any $d$-dimensional polyhedron with 
$f$ facets. It is known to be true for several special classes of polyhedra (see \cite{KimSantos10} for a list), such as $(0,1)$-polytopes 
\cite{Naddef89} or dual transportation polyhedra \cite{Balinski84}, and some of these results extend to lattice polytopes, 
e.g., \cite{DelPiaMinichi15,DezaManoussakisOnn15}.  Today, we know the Hirsch bound does not hold in general, neither for 
unbounded polyhedra nor for bounded polytopes \cite{KleeWalkup67,Santos12}. It remains a famous open problem to prove or 
disprove that the diameter of polyhedra can be exponential in the dimension and the number of facets of the polyhedron.

Network-flow problems are among the simplest and oldest  linear optimization problems, appearing in all introductory linear programming textbooks. 
It is well-known that the simplex method has a simple graph-theoretical meaning for these problems \cite{vanderbei}. The
network structure allows for a well-known translation of the simplex method steps in terms of arc additions (e.g., variables are arcs,  basic feasible 
bases are some of the spanning trees of the network and the pivot entering/leaving variables are arcs sharing a cycle of the network). Network-flow
problems show exponential pivoting behavior \cite{Zadeh1973}, but unlike the 
general simplex method, we now know versions of the (primal) network simplex method run in polynomial time \cite{orlin}. 
The set of possible feasible flows in the network correspond to the points of a  \emph{network-flow polyhedron}.  Here we restrict ourselves to study network-flow polytopes, i.e., where the solution space is bounded. These polytopes satisfy some special properties
(e.g., for integral input data, they have integer vertices too (see \cite{schrijver-book}),  and there are already some polynomial upper bounds on the diameter of network-flow polytopes (see \cite{bdehn-12,ff-62,orlin}). However, the exact bound on the diameter of network-flow polytopes diameter has also remained an open question. In this paper we present a final solution of this question showing that all network-flow polytopes have a diameter no larger than the Hirsch bound, and that this
is tight in many cases.


\begin{theorem}\label{main-network}
The diameters of all network-flow polytopes satisfy the Hirsch conjecture. There are network-flow polytopes for which the Hirsch bound is tight. As another consequence, the diameter of a network with $n$ nodes and $m$ arcs is no more than the linear bound $m+n-1$.
\end{theorem}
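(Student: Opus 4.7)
The plan is organised in three layers: a diameter bound for classical transportation polytopes; a reduction of general network-flow polytopes to the transportation case; and a matching lower-bound construction. The number $m+n-1$ is precisely the Hirsch bound for a network-flow polytope, since the polytope has dimension $m-n+1$ (the $n$ node-balance equations have one linear redundancy) and at most $2m$ facets (lower and upper capacity bounds), giving a Hirsch target of $2m-(m-n+1)=m+n-1$.

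For the transportation core, a vertex of the $p\times q$ transportation polytope corresponds to a spanning tree of $K_{p,q}$ with $p+q-1$ basic arcs, and two vertices are adjacent when their trees differ by a single edge swap along a cycle. I would proceed by induction on $p+q$, aiming at the bound $p+q-1$. Given vertices $u$ and $v$, the inductive step searches for a pivot from $u$ that produces a vertex $u'$ in which some source or sink is a \emph{leaf} whose unique incident basic arc also lies in the tree of $v$ and carries the matching flow value; this node and arc can then be peeled off, reducing to a transportation polytope on one fewer node, where the inductive hypothesis delivers a path of length at most $p+q-2$ from $u'$ to $v$. In favourable cases the common leaf already exists in $u$, saving a step from the budget.

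The hardest part will be showing that a valid peeling pivot is always available and always costs at most one step. Degeneracy is the main obstruction: many basic arcs may carry zero flow, and naive pivot candidates can be blocked or fail to make progress. My plan is to fix a root in the tree of $u$ and design a potential function---based for instance on signed flow differences along the fundamental cycles of $v$'s tree with respect to $u$'s---that strictly decreases with the chosen pivot; ensuring that the total decrease across the induction matches the $p+q-1$ budget is the bookkeeping I expect to dominate the technical work.

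With the transportation bound in hand, the lift to arbitrary network-flow polytopes proceeds via the standard node-splitting construction, in which each capacitated arc is replaced by a small bipartite gadget that encodes its interval $[\ell_e,u_e]$ as transport between an auxiliary source and sink. A careful count gives a transportation instance whose source-plus-sink total satisfies $p+q\le m+n$, so the bound $p+q-1$ descends to $m+n-1$ on the original polytope, provided adjacency in the network-flow polytope lifts to short paths in the transportation polytope---a step verified gadget-by-gadget. For tightness I would exhibit a capacitated family of networks in which two vertices lie at distance exactly $m+n-1$; such examples necessarily use arcs with both finite lower and upper capacity, since only then is the facet count $2m$ achieved and the Hirsch target actually saturated.
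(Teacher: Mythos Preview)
Your overall architecture---prove the bound for transportation polytopes, then reduce general networks to that case---matches the paper. The reduction in the paper is actually stronger than you suggest: the arc-splitting construction yields a bipartite uncapacitated network whose polytope is \emph{linearly isomorphic} to the original capacitated network-flow polytope (so adjacency is preserved exactly, with no gadget-by-gadget verification needed), and this bipartite polytope is a \emph{face} of a full transportation polytope. The paper therefore needs the Hirsch bound for faces of transportation polytopes, not only for transportation polytopes themselves; your sketch does not address this.

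The genuine gap is in your inductive peeling argument for the transportation core. The paper's Example~\ref{THEex1} (a $2\times 3$ instance with supplies $3,3$ and demands $2,2,2$) already breaks it: the trees $O$ and $F$ have no common leaf, and one checks directly that \emph{neither} of the two possible single pivots from $O$ produces a tree sharing a leaf-edge with $F$. So one pivot is not always enough to create a peelable node, and your budget of ``at most one pivot per peel'' cannot be met. This is exactly the phenomenon that forces the paper's more delicate approach: sometimes an edge already shared with $F$ must be deleted and reinserted later, so a monotone-progress or potential-function argument based on shrinking the symmetric difference of the two trees does not work. The paper instead fixes a root $\delta^*$ in $F$, labels the edges of $F$ alternately $+$ and $-$ along paths from $\delta^*$, and runs an explicit insertion algorithm governed by two invariants---(UNO), controlling the component structure of already well-connected edges, and (SIN), controlling which supply node to pivot at---that together guarantee no previously inserted edge of $F$ is ever deleted.

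Two smaller corrections. First, $m+n-1$ is only an upper bound on the Hirsch number $f-d$, not its exact value (equality requires all $2m$ bound constraints to be facet-defining); the paper presents $m+n-1$ as a convenient linear corollary, not as the Hirsch bound itself. Second, tightness is about the Hirsch bound $f-d$, not about $m+n-1$: it is witnessed by ordinary uncapacitated transportation polytopes with no critical pairs, whose diameter equals $N_1+N_2-1=f-d$, not by fully capacitated networks saturating $2m$ facets as you propose.
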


To prove Theorem \ref{main-network} we reduce it to a special type of networks-flow problems and prove the Hirsch conjecture for the corresponding polytopes. A transportation problem models the minimum-cost of transporting goods from  $N_1$ supply nodes to $N_2$ demand nodes, where each of 
these $n=N_1+N_2$ (total) nodes sends, respectively receives, a specified quantity of a product (we assume demand is equal to supply and that the cost is given by the sum  of the costs at each connection). This is readily represented as a network-flow problem on a bipartite network. The network has $N_1\cdot N_2$ arcs.
See \cite{DeLoeraKim13,DeLoeraKimOnnSantos09,KleeWitzgall68,KYK84} and references therein for detailed information about  transportation polytopes.
 In prior work, the Hirsch bound was shown to hold for all $2{\times}N$- and $3{\times} N$-transportation polytopes \cite{BorgwardtDeLoeraFinholdMiller14}. The so-called partition polytopes \cite{Borgwardt13} and the Birkhoff polytopes \cite{BalinskiRussakoff74}, both classes of $0,1$-transportation polytopes, satisfy even much smaller bounds.  Despite a lot of research (see \cite{DeLoeraKim13} for an overview), the value of the exact diameter of transportation polytopes has remained an open problem until now. In this paper we finally solve this problem.  

For general transportation polytopes, the best published bound until now was  $8(N_1+N_2-2)$ presented in \cite{BrightwellHeuvelStougie06},
a factor of eight away from the bound claimed by the Hirsch conjecture (other improvements were presented, but are unpublished, see summary in \cite{DeLoeraKim13}).
For $N_1{\times}N_2$-transportation polytopes, the Hirsch conjecture states the diameter bound of a given transportation polytope is $N_1+N_2-1-\crit$, where $\crit$ is 
the number of so-called \emph{critical pairs} of a supply and a demand node. These are the variables that are strictly positive in every feasible solution to our 
transportation problem (a value that is not purely combinatorial, but depends on the demands and supply values). 
Here we finally prove the Hirsch conjecture  holds for all $N_1{\times}N_2$-transportation polytopes and their faces.

\begin{theorem}\label{main thm}
The diameter of an $N_1 {\times} N_2$-transportation polytope is bounded above by $N_1+N_2-1-\crit$, 
where $\crit$ is the number of critical pairs of the transportation polytope. Therefore, the Hirsch conjecture is true for all $N_1 {\times} N_2$-transportation polytopes. More strongly, all faces of $N_1 {\times} N_2$-transportation polytopes satisfy the Hirsch conjecture.
\end{theorem}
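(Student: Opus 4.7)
The plan is to prove the theorem by induction on $N_1+N_2$, strengthened to cover all faces of transportation polytopes simultaneously. This strengthening is natural because every face of an $N_1{\times}N_2$-transportation polytope is itself a generalized transportation polytope on a bipartite subgraph of $K_{N_1,N_2}$, obtained by forbidding the arcs associated with the zero variables defining the face. In this extended setting the claimed bound still matches the Hirsch bound, namely (number of non-forbidden arcs) minus (dimension), so proving the stronger statement yields the face claim at no extra cost.

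First I would recall the standard combinatorial dictionary: each vertex corresponds (up to degeneracy) to a feasible flow supported on a spanning tree of the underlying bipartite network, and two vertices are adjacent iff their spanning trees differ by the exchange of one arc along their unique fundamental cycle. A key observation is that every critical arc lies in every spanning tree of every vertex, since critical arcs are by definition strictly positive in every feasible solution. Hence, for any two vertices $u$ and $v$, both supporting trees $T_u,T_v$ contain all $\crit$ critical arcs, and in particular $|T_u \triangle T_v|\le 2(N_1+N_2-1-\crit)$.

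The inductive step is driven by the following reduction. Pick a leaf $w$ of $T_v$, with unique incident arc $a=(w,w')$, and let $\si_w$ denote the supply (or demand) at $w$; in $v$ the flow on $a$ equals $\si_w$. The key lemma to prove is a \emph{one-pivot lemma}: from $u$, in at most one pivot of the polytope, one reaches a vertex $u'$ lying in the face $F=\{x\in P : x_a=\si_w\}$. Granting this, both $u'$ and $v$ live in $F$, which, after saturating $w$ along $a$ and deleting $w$, is a transportation polytope on $N_1+N_2-1$ nodes; the inductive hypothesis gives $d_F(u',v)\le N_1+N_2-2-\crit_F$, and hence $d_P(u,v)\le N_1+N_2-1-\crit_F$. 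Critical pairs of $P$ not incident to $w$ remain critical in $F$, and by choosing $w$ so as to avoid the critical forest whenever possible, one ensures $\crit_F\ge \crit$, closing the induction.

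The main obstacle is the one-pivot lemma together with the correct choice of leaf. When $a\not\in T_u$, adding $a$ to $T_u$ creates a fundamental cycle $C$, and the induced network-simplex pivot pushes flow along $C$ until some arc empties; one must arrange that the amount pushed equals exactly $\si_w$ and that the leaving arc is chosen consistently. When $a\in T_u$ but the flow on $a$ at $u$ differs from $\si_w$, further care is needed, either choosing a different leaf of $T_v$ (exploiting the fact that every spanning tree has at least two leaves) or performing a preparatory pivot whose cost is absorbed by the induction. The choice of $w$ must also respect critical-pair accounting: if every leaf of $T_v$ is incident to a critical arc, one first contracts the critical forest so the reduced problem is free of such obstructions. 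Handling these three issues---flow amount, leaf selection, and degeneracy of vertices corresponding to several supporting spanning trees---is where the bulk of the technical work lies.
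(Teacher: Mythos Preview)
Your proposal outlines a natural induction-on-nodes scheme, but the central ``one-pivot lemma'' is false as stated, and the paper's own Example~\ref{THEex1} already witnesses this. Take $N_1=2$, $N_2=3$, all supplies $3$, all demands $2$, with $O=\{\{\sigma_1,\delta_1\},\{\sigma_1,\delta_2\},\{\sigma_2,\delta_2\},\{\sigma_2,\delta_3\}\}$ and $F=\{\{\sigma_1,\delta_2\},\{\sigma_1,\delta_3\},\{\sigma_2,\delta_1\},\{\sigma_2,\delta_2\}\}$. The only leaves of $T_v=F$ are $\delta_1$ (with arc $a=\{\sigma_2,\delta_1\}$) and $\delta_3$ (with arc $\{\sigma_1,\delta_3\}$). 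For $w=\delta_1$, the target face is $\{x_{\sigma_1,\delta_1}=0\}$; the unique pivot inserting $a$ into $O$ pushes only one unit (the bottleneck is $x_{\sigma_2,\delta_2}=1$), leaving $x_{\sigma_1,\delta_1}=1$, and no other single pivot from $O$ kills $\{\sigma_1,\delta_1\}$ since the only other insertable arc $\{\sigma_1,\delta_3\}$ creates a cycle not containing it. The leaf $\delta_3$ is symmetric. Thus for \emph{every} leaf of $T_v$ the one-pivot lemma fails. Your proposed fallback---``a preparatory pivot whose cost is absorbed by the induction''---does not work arithmetically: it gives $2+(N_1+N_2-2-\mu_F)$, which exceeds $N_1+N_2-1-\mu$ whenever $\mu_F=\mu$. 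More generally, if $w$ has degree $k$ in $T_u$, routing all of $w$'s flow onto a single arc can require up to $k-1$ pivots, and nothing in your scheme bounds $k$.

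This is precisely the obstruction the paper's introduction flags around Example~\ref{THEex1}: one may be \emph{forced} to delete an edge of $F$ along the way, so no purely local, leaf-contraction argument can succeed. The paper's proof takes a genuinely different route. Rather than peeling off one node at a time, it fixes a global $\pm$ labeling of the edges of $F$ (alternating along paths from a root $\delta^*$) and then inserts the edges of $F$ one by one in an order governed by that labeling, maintaining two structural invariants on the current tree: (UNO), that each ``well-connected component'' has a unique open node, and (SIN), a parity condition on the current insertion vertex guaranteeing that every odd edge on paths from it is either unshaded or a safely placed $-$edge. The work is to show (Lemmas~\ref{lem: delete unshaded} and~\ref{lem: insert edge}) that under these invariants the pivot determined by the margins never deletes a previously inserted edge, and that the invariants are preserved. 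Since each of the $N_1+N_2-1$ edges of $F$ is inserted at most once, and the $\mu$ critical edges are already present and only shaded, the walk has length at most $N_1+N_2-1-\mu$. If you want to rescue an inductive approach, you would need a substitute for the one-pivot lemma that controls \emph{which} edge leaves during a pivot; the paper's (UNO)/(SIN) machinery is exactly such a control, but it is global rather than node-local.
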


One can prove that any network-flow polytope is in fact isomorphic to a face of a transportation polytope. 
Therefore, once we prove Theorem \ref{main thm} we can use it to prove Theorem \ref{main-network}. 

The proof of Theorem \ref{main thm} is an algorithm that connects any two vertices of a transportation polytope by a walk on 
the $1$-skeleton that has length at most  $N_1+N_2-1-\crit$.  Since the walk produced by the algorithm stays in the minimal face containing both vertices the rest of Theorem \ref{main thm} follows immediately. 
Note that  the faces of transportation polytopes are transportation problems with some prescribed `missing edges'. 
From the proof it is clear that the diameter of any $N_1 {\times} N_2$-transportation polytope is never more than $N_1+N_2-1$. We remark that for each value of $N_1,N_2$ with $N_1\geq 3$, $N_2\geq 4$, there exist concrete $N_1{\times}N_2$-transportation polytopes that have no critical pairs and attain the bound of $N_1+N_2-1$ \cite{KYK84}.  Therefore, some transportation polytopes are in fact \emph{Hirsch-sharp polytopes} in the sense of \cite{hk-99,hk-98}.

Our paper is structured as follows. In Section \ref{sec: TP prelim}, we recall the necessary background on network flows and transportation 
polytopes and introduce our general notation. Using the language of networks we show that Theorem \ref{main-network} follows from Theorem \ref{main thm}.
 In Section \ref{sec:algo}, we present our algorithm that constructs a walk on the skeleton of the faces of transportation polytopes
adhering to the bounds in Theorem \ref{main thm}. Section \ref{sec:correct} is dedicated to the correctness proof of the algorithm and the proof of Theorem \ref{main thm}.

\section{Background and How Theorem \ref{main thm} implies Theorem \ref{main-network}}\label{sec: TP prelim}

Recall a {\em network} is a graph with $n$ nodes and $m$ directed edges (or arcs), where each node $v$ has an integer value
specified, the so called {\em excess} of $v$, and each arc has an
assigned positive integer or infinite value called its {\em capacity}. A {\em
feasible flow} is an assignment of non-negative real values to the arcs of the
network so that for any node $v$ the sum of
values in outgoing arcs minus the sum of values in incoming arcs
equals the prescribed excess of the node $v$ and the capacities of the
arcs are not surpassed. 

The set of all feasible flows with given excess vector $b$
and capacity vector $c$ is a convex polyhedron, the  {\em
network-flow polyhedron}, which is defined by the constraints $\Phi_G
x=b$,\,\,\,$ 0 \leq x \leq c$, where $\Phi_G$ denotes the node-arc
incidence matrix of $G$ (a {\em network matrix}). The incidence
matrix $\Phi_G$ has one column per arc and one row per node. 
For an arc going
from $i$ to $j$, its corresponding column has zeros everywhere
except at the $i$-th and $j$-th entries. The $j$-th entry, the  
{\em head} of the arrow, receives a $-1$ and the $i$-th entry, 
{\em tail} of the arrow, a $1$. The optimization problem $\min\; r^\intercal x, \ \Phi_Gx =b, \  0 \leq x \leq c$ is
the {\em min-cost flow} problem \cite{vanderbei}. 

In what follows, we assume the network-flow polyhedra we consider are actually \emph{polytopes}, i.e., they are bounded subsets of space.
Boundedness can be easily checked in terms of the network, by testing for directed cycles on the network (see \cite{greenberg}).
Note that boundedness is obvious for all networks with finite capacities in their arcs. It is worth noticing that it was much harder to disprove
the Hirsch conjecture for polytopes (by Santos \cite{Santos12}) than for unbounded polyhedra (by Klee \& Walkup \cite{KleeWalkup67}). 
Here we leave open the possibility that an unbounded network-flow polyhedron violates the Hirsch conjecture.

We continue with some background on transportation polytopes and their $1$-skeleton.
An $N_1{\times}N_2$-transportation problem has $N_1$ supply points and $N_2$ demand points. Each supply point holds a quantity $u_i>0$ and each 
demand point needs a quantity $v_j>0$ of a product. The vectors $\veu=(u_1,\dots,u_{N_1})$ and $\vev=(v_1,\dots,v_{N_2})$ are the \emph{margins} 
for the transportation polytope. The total supply equals the total demand, so formally $\sum_{i=1}^{N_1} u_i = \sum_{j=1}^{N_2} v_j$. Let $y_{ij}\geq 0$ 
denote the flow from supply point $i$ to demand point $j$. Then the set of \emph{feasible flows} $\vey\in\R^{{N_1}{\times} {N_2}}$ can be described as
\[
		\begin{array}{lcrclcl}
		 &  & \sum\limits_{j=1}^{N_2}  y_{ij}    & =& u_i & \quad & i=1,...,{N_1},\\
		                         &      & \sum\limits_{i=1}^{N_1} y_{ij} & =    &  v_j
		                         & \quad & j=1,...,{N_2},\\
		                         &      &          y_{ij}
		                         & \geq  & 0                   & \quad &  i=1,...,N_1,\text{ }j=1,...,{N_2}.\\
		\end{array}
\]
The set of all real solutions of this system of equations and inequalities constitutes the {\em transportation polytope} $\TP$. 

When discussing an $N_1{\times}{N_2}$-transportation problem, it is common practice to think of the supply and demand points as nodes in the complete bipartite graph $K_{{N_1},{N_2}}$. In the following we denote supply nodes by $\si$ and demand nodes by $\di$. 
For a feasible solution of a transportation problem, we define the \emph{support graph} as the subgraph of $K_{{N_1},{N_2}}$ that contains precisely the edges of non-zero flow, i.e., $y_{ij}>0$. 

In general, the points of transportation polytopes do not have connected support graphs. However, this is the case for the $0$-dimensional faces, or  \emph{vertices}, of non-degenerate transportation polytopes. An $N_1{\times}N_2$-transportation polytope is \emph{non-degenerate} if every vertex of the polytope has exactly $N_1+{N_2}-1$ non-negative entries. It is well-known that this is the 
case if and only if there are no non-empty proper subsets  $I\subsetneq \{1,\ldots,{N_1}\}$ and $J\subsetneq \{1,\ldots,N_2\}$ such that $\sum_{i\in I} u_i=\sum_{j\in J} v_j$, see \cite{KYK84}. Note that for each degenerate $N_1{\times}N_2$-transportation polytope there is a non-degenerate $N_1{\times}N_2$-transportation polytope of the same or larger combinatorial diameter \cite{KYK84}. Therefore, it suffices to consider non-degenerate transportation polytopes to prove upper bounds. We exploit this in the upcoming proof. 

For transportation polytopes the vertices can be characterized in terms of their support graphs: A feasible solution $\vey$ is a vertex if and only if its support graph contains no cycles, that is, it is a spanning forest. Observe that a vertex is uniquely determined by (the edge set of) its support graph  and the vertices of non-degenerate transportation polytopes are given by spanning trees (see for example \cite{KleeWitzgall68}). Therefore, we refer to the support graphs of vertices of transportation polytopes, as well as to the vertices themselves, simply as \emph{trees} and typically denote them by the capital letters $O$ (for `original' tree), $F$ (for `final tree'), and $C$ and $\suc$ (for the `current' and `succeeding' tree, corresponding to neighboring vertices of the transportation polytope). Here, the margins of the polytope play an important role: They define which trees appear as vertices of the polytope, and which do not.

We continue with characterizing the  $1$-dimensional faces of the transportation polytope in terms of the support graphs. Note that, to avoid confusion, 
we use the term `edge' only  for the edges of the underlying bipartite graphs, but not for the $1$-faces of the transportation polytope.

\begin{proposition}[see e.g., Lemma 4.1 in \cite{KYK84}]\label{prop2}
Let $C$ and $\suc$ be two trees that correspond to vertices of an $N_1{\times}N_2$-transportation polytope $\TP$. 
Then the vertices are adjacent in the $1$-skeleton of $\TP$ if and only if $C\cup \suc$ contains a unique cycle.
\end{proposition}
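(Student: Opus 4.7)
The strategy is to identify the minimal face $F$ of $\TP$ containing both vertices $C$ and $\suc$, compute its dimension from the graph $C\cup\suc$, and observe that adjacency is equivalent to $\dim F = 1$. By the non-degeneracy assumption (which we are entitled to make; see the remark preceding the proposition), the support of each vertex equals its tree. Define
\[
F \;=\; \{\vey \in \TP : y_{ij} = 0 \text{ for every arc } (i,j) \notin C\cup\suc\}.
\]
This is a face of $\TP$, and it contains both $C$ and $\suc$. Any strictly smaller face would additionally require $y_{ij}=0$ for some arc $(i,j)\in C\cup\suc$, but such an equation is violated by at least one of the two vertices; hence $F$ is the unique minimal face through $C$ and $\suc$.

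Next I would compute $\dim F$ by linear algebra. After deleting the coordinates fixed to zero, $F$ is cut out of $\R^{|C\cup\suc|}_{\geq 0}$ by the margin equations, whose coefficient matrix is the node-arc incidence matrix $\Phi_{C\cup\suc}$ of the subgraph $C\cup\suc$. Because $C$ is a spanning tree, $C\cup\suc$ spans all $N_1+N_2$ nodes and is connected, so $\mathrm{rank}(\Phi_{C\cup\suc})=N_1+N_2-1$. The midpoint $\tfrac12 C + \tfrac12 \suc$ is strictly positive on every arc of $C\cup\suc$, so $F$ has a relative interior point and
\[
\dim F \;=\; |C\cup\suc| - (N_1+N_2-1),
\]
which is precisely the cyclomatic number of the connected graph $C\cup\suc$. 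Two vertices of any polytope are adjacent iff the minimal face containing them is one-dimensional, so $C$ and $\suc$ are adjacent iff $\dim F = 1$ iff the cyclomatic number of $C\cup\suc$ equals one, iff $C\cup\suc$ contains a unique (simple) cycle.

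The main obstacle is the bookkeeping that identifies $F$ as the minimal face through $C$ and $\suc$; this is where non-degeneracy enters, since in the degenerate case a vertex may admit several tree representations and its support is then a strict subset of any such tree. Once that step is settled, the rest is routine: the dimension formula follows from rank-nullity applied to the incidence matrix of a connected graph, and the equivalence between cyclomatic number one and the existence of a unique simple cycle is standard graph theory.
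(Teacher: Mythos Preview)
The paper does not give its own proof of this proposition; it is stated with a reference to Lemma~4.1 in \cite{KYK84} and used as background. Your argument is correct and is in fact the standard one: identify the minimal face containing the two vertices as $F=\{y\in\TP: y_{ij}=0\text{ for }(i,j)\notin C\cup\suc\}$, compute $\dim F=|C\cup\suc|-(N_1+N_2-1)$ via the rank of the incidence matrix of the connected graph $C\cup\suc$, and observe this equals the cyclomatic number. The use of non-degeneracy to ensure that the support of each vertex coincides with its tree (so that $F$ is genuinely the minimal face) is exactly the right place to invoke it, and the midpoint argument cleanly shows that no inequality $y_{ij}\ge 0$ with $(i,j)\in C\cup\suc$ is active on all of $F$.
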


In particular, walking from some vertex to a neighboring vertex in the polytope (taking a step on the skeleton / walking along a $1$-face of the polytope) 
corresponds to changing the flow on the edges of $K_{N_1,N_2}$:  Being at a vertex (spanning tree) $C$, we insert an arbitrary edge $\{\si,\di \}\notin C$ into $C$. 
This closes a cycle of even length. We alternately increase and decrease flow on the edges of this cycle, where we increase on the edge we inserted. All edges 
are changed by the same amount which is the minimum existing flow among the edges that are decreased. Due to non-degeneracy (which we may assume here) 
this deletes exactly one edge and hence leads to a tree $\suc$ that is a neighboring vertex of the transportation polytope. Note that the flow on the edges in $C$ is 
determined by the margins of the polytope, so the margins determine which edge is deleted when inserting an edge.


\bigskip

As mentioned before, to prove validity of the Hirsch conjecture, we have to show an upper bound of $f-d=N_1+N_2-1-\crit$ on the combinatorial diameter of $N_1{\times}N_2$ 
transportation polytopes for $\crit$ the number of \emph{critical pairs}. A critical pair $(\si,\di)$ corresponds to an edge $\{\si,\di\}$ that is present in the support graph 
of all feasible solutions. Equivalently, they are the variables satisfying $y_{ij}>0$ for all feasible solutions $\vey$. 
To see that $N_1+N_2-1-\crit$ is in fact the Hirsch bound, note that the dimension of an $N_1{\times}N_2$-transportation polytope is $d=(N_1-1)(N_2-1)$ \cite{KleeWitzgall68}, 
and for non-degenerate transportation polytopes the number of facets is  $f=N_1\cdot N_2 - \crit$. This follows immediately from Theorem 2 in \cite{KleeWitzgall68}. 

\begin{example}[Critical pairs]\label{ex:critical edges}
Consider a $2{\times}3$-transportation polytope with supply nodes $\sigma^1,\sigma^2$ with margins $u_1=5, u_2=3$ and demand nodes $\delta^1,\delta^2, \delta^3$ with margins $v_1=4, v_2=2,v_3=2$. Then the edge $\{\sigma^1,\delta^1\}$ has to be present in every feasible tree: Demand node $\delta^1$ with demand $v_1=4$ cannot  be connected only to the supply node $\sigma^2$ with total supply $u_2=3$. The Figure \ref{fig:ex critical} illustrates two trees that correspond to vertices of the polytope. Nodes are labeled with the margins, edges with the flow. 

	
\begin{figure}[H]
		\centering
			\begin{tikzpicture}[scale=0.6]
					\coordinate (c1) at (1,0);
					\coordinate (c2) at (3,0);
					\coordinate (x1) at	(0,2);
					\coordinate (x2) at (2,2);
					\coordinate (x3) at (4,2);
					\draw [fill, black] (c1) circle (0.1cm);
					\draw [fill, black] (c2) circle (0.1cm);
					\draw [fill, black] (x1) circle (0.1cm);
					\draw [fill, black] (x2) circle (0.1cm);
					\draw [fill, black] (x3) circle (0.1cm);
					\node[below] at (c1) {$u_1=5$};
					\node[below] at (c2) {$u_2=3$};
					\node[above] at (x1) {$v_1=4$};
					\node[above] at (x2) {$v_2=2$};
				  \node[above] at (x3) {$v_3=2$};
				  \draw  (c1)--(x2)node[midway] {$\ \;1$} ;
				  \draw  (c2)--(x2)node[midway] {$1\ \;$} ;
					\draw (c1)--(x1) node[midway] {$4\ \;$} ;
				  \draw  (c2)--(x3)node[midway] {$\ \;2$} ;
			\end{tikzpicture}
\hspace*{2cm}
			\begin{tikzpicture}[scale=0.6]
					\coordinate (c1) at (1,0);
					\coordinate (c2) at (3,0);
					\coordinate (x1) at	(0,2);
					\coordinate (x2) at (2,2);
					\coordinate (x3) at (4,2);
					\draw [fill, black] (c1) circle (0.1cm);
					\draw [fill, black] (c2) circle (0.1cm);
					\draw [fill, black] (x1) circle (0.1cm);
					\draw [fill, black] (x2) circle (0.1cm);
					\draw [fill, black] (x3) circle (0.1cm);
					\node[below] at (c1) {$u_1=5$};
					\node[below] at (c2) {$u_2=3$};
					\node[above] at (x1) {$v_1=4$};
					\node[above] at (x2) {$v_2=2$};
				  \node[above] at (x3) {$v_3=2$};				  
				  \draw  (c1)--(x2)node[near end] {$\ \;2$} ;
				  \draw  (c2)--(x1)node[near end] {$1$} ;
					\draw (c1)--(x1) node[midway] {$3\ \;$} ;
				  \draw  (c2)--(x3)node[midway] {$\ \;2$} ;
			\end{tikzpicture}
			\caption{The edge connecting the nodes with margins $5$ and $4$ is present in both trees.}
			\label{fig:ex critical}
\end{figure}

Note that the two trees differ in exactly one edge, and thus are neighbors as vertices in the $1$-skeleton of the transportation polytope. 
In particular, this example shows that even though the edge corresponding to the critical pair is present in any tree, the flow on this edge 
might change during a walk on the skeleton of the transportation polytope. Therefore, we cannot simply disregard the edge and assume to 
have a transportation polytope without critical pairs. \hfill\qed

\end{example}

Note that $N_1+N_2-1-\crit$ is precisely the maximum number of edges in which two trees $O,F$, within the same transportation polytope, can differ. 
In particular, for proving the Hirsch conjecture for $N_1{\times}N_2$-transportation polytopes it is enough to show that there is a finite sequence 
of steps from the original tree $O$ to the final tree $F$ that inserts the edges in $F \setminus O$ one after another such that no inserted edge is deleted at a later point. 
However, the following  example, first mentioned in \cite{BrightwellHeuvelStougie06}, shows that it might be necessary to first delete an edge that is contained in the final tree $F$, 
and only reinsert it at a later step. 

\begin{example}\label{THEex1}
Consider the walk from a tree $O$ to a tree $F$ in Figure \ref{fig: THEex for TPs} on the skeleton of a transportation polytope. All supply 
nodes (bottom row) have supply $3$, all demand nodes (top row) have demand $2$. The edges are labeled with the current flow. The 
dashed edges are the edges we insert in the respective pivot:
	\begin{figure}[H]
		\centering
			\begin{tikzpicture}[scale=0.5]
					\coordinate (c1) at (1,0);
					\coordinate (c2) at (3,0);
					\coordinate (x1) at	(0,2);
					\coordinate (x2) at (2,2);
					\coordinate (x3) at (4,2);
					\draw [fill, black] (c1) circle (0.1cm);
					\draw [fill, black] (c2) circle (0.1cm);
					\draw [fill, black] (x1) circle (0.1cm);
					\draw [fill, black] (x2) circle (0.1cm);
					\draw [fill, black] (x3) circle (0.1cm);
				  \draw (c1)--(x2) ;
				  \draw (c2)--(x2) ;
					\draw (c1)--(x1);
				  \draw (c2)--(x3) ;			  
					\node at (2,-1.5) {tree $O$};
			\end{tikzpicture}
\hspace*{0.6cm}
			\begin{tikzpicture}[scale=0.5]
					\coordinate (c1) at (1,0);
					\coordinate (c2) at (3,0);
					\coordinate (x1) at	(0,2);
					\coordinate (x2) at (2,2);
					\coordinate (x3) at (4,2);
					\draw [fill, black] (c1) circle (0.1cm);
					\draw [fill, black] (c2) circle (0.1cm);
					\draw [fill, black] (x1) circle (0.1cm);
					\draw [fill, black] (x2) circle (0.1cm);
					\draw [fill, black] (x3) circle (0.1cm);
				  \draw  (c1)--(x2)node[midway] {$1$};
				  \draw  (c2)--(x2)node[midway] {$1$} ;
					\draw (c1)--(x1) node[midway] {$2$} ;
				  \draw  (c2)--(x3)node[midway] {$2$} ;
				  \draw[dashed  ] (c1)--(x3);				  
					\node at (2,-1.5) {pivot $1$};
			\end{tikzpicture}
\hspace*{0.6cm}
			\begin{tikzpicture}[scale=0.5]
					\coordinate (c1) at (1,0);
					\coordinate (c2) at (3,0);
					\coordinate (x1) at	(0,2);
					\coordinate (x2) at (2,2);
					\coordinate (x3) at (4,2);
					\draw [fill, black] (c1) circle (0.1cm);
					\draw [fill, black] (c2) circle (0.1cm);
					\draw [fill, black] (x1) circle (0.1cm);
					\draw [fill, black] (x2) circle (0.1cm);
					\draw [fill, black] (x3) circle (0.1cm);
				  \draw  (c1)--(x1)node[midway] {$ 2$};
				  \draw[dashed  ] (c2)--(x1);
				  \draw (c1)--(x3)node[near start] {$ 1$};
				  \draw (c2)--(x2)node[near end] {$2 $};
					\draw  (c2)--(x3)node[midway] {$1 $};
					\node at (2,-1.5) {pivot $2$};
			\end{tikzpicture}
\hspace*{0.6cm}
			\begin{tikzpicture}[scale=0.5]
					\coordinate (c1) at (1,0);
					\coordinate (c2) at (3,0);
					\coordinate (x1) at	(0,2);
					\coordinate (x2) at (2,2);
					\coordinate (x3) at (4,2);
					\draw [fill, black] (c1) circle (0.1cm);
					\draw [fill, black] (c2) circle (0.1cm);
					\draw [fill, black] (x1) circle (0.1cm);
					\draw [fill, black] (x2) circle (0.1cm);
					\draw [fill, black] (x3) circle (0.1cm);
				  \draw  (c1)--(x1)node[midway] {$1 $};
				  \draw (c1)--(x3)node[near end] {$ 2$};
				  \draw  (c2)--(x1)node[near end] {$1 $};
					\draw  (c2)--(x2)node[near end] {$ 2$};
					\draw[dashed  ] (c1)--(x2);
					\node at (2,-1.5) {pivot $3$};
			\end{tikzpicture}
\hspace*{0.6cm}
			\begin{tikzpicture}[scale=0.5]
					\coordinate (c1) at (1,0);
					\coordinate (c2) at (3,0);
					\coordinate (x1) at	(0,2);
					\coordinate (x2) at (2,2);
					\coordinate (x3) at (4,2);
					\draw [fill, black] (c1) circle (0.1cm);
					\draw [fill, black] (c2) circle (0.1cm);
					\draw [fill, black] (x1) circle (0.1cm);
					\draw [fill, black] (x2) circle (0.1cm);
					\draw [fill, black] (x3) circle (0.1cm);
				  \draw (c1)--(x3);
				  \draw (c1)--(x2);
				  \draw (c2)--(x1);
					\draw (c2)--(x2);
					\node at (2,-1.5) {tree $F$};
			\end{tikzpicture}
		\caption{A walk from tree $O$ to tree $F$ of length three.} \label{fig: THEex for TPs}
	\end{figure}
 


Note that $O$ and $F$ differ by only two edges and that all edges that are not in $O$ are in $F$, and vice versa. No matter which edge we insert in the first step, we have to delete an edge that is contained in $F$. Then we need at least two more steps, as we still have to insert two edges from $F$ and can only insert exactly one edge in a single step. Therefore the walk above is a walk of minimum length between $O$ and $F$.
\hfill\qed
\end{example}

It is important to be aware of the fundamental role that the numerical margins play for a walk on the skeleton of a transportation polytope.  
The margins define which particular spanning trees of $K_{N_1,N_2}$ appear as vertices of the polytope. Let us contrast this with a similar family of polytopes.  For the (graphical) matroid polytopes $P_{\text{Mat}(K_{N_1,N_2})}$ of complete bipartite graphs $K_{N_1,N_2}$, the vertices correspond to \emph{all} the spanning trees of $K_{N_1,N_2}$. Like before, two spanning trees differing in exactly one edge correspond to neighboring vertices of the polytope 
$P_{\text{Mat}(K_{N_1,N_2})}$. For a walk on the skeleton of these polytopes, we can simply add the edges containes in the final tree $F$ by exchanging them with edges in the original tree in any order. We do not care about flow values, any two vertices of $P_{\text{Mat}(K_{N_1,N_2})}$ are connected by a walk of length equal to the number of edges in which the two spanning trees differ. In such a sequence we never have to delete an edge that is contained in the target spanning tree $F$, thus the situation from Example \ref{THEex1} does not occur. Thus proving the Hirsch conjecture for graphical matroid polytopes is a purely combinatorial process, whereas the analysis for transportation polytopes requires the margins. 

The following Lemma \ref{reducetobipartite} allows us to derive Theorem \ref{main-network} from Theorem \ref{main thm}.

\begin{lemma} \label{reducetobipartite}

 Given a network $G$ with $n$ nodes and $m$ arcs, with capacity vector $c$
and excess vector $b$, there is a bipartite uncapacitated network
$\widehat{G}$ with $n+m$ nodes ($N_1=n, N_2=m$), $M=2m$ arcs, and excess vector
$\widehat{b}$ (a linear combination of $b$,$c$) such that the
integral flows in both networks are in bijection and the two corresponding polytopes are isomorphic. 
The network $\widehat{G}$ is obtained from $G$ by replacing each arc by two new arcs and a new node and changing the excesses at each node as illustrated in Figure \ref{fig:bounded to unbounded}.
\end{lemma}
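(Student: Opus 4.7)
The plan is to carry out the standard arc-splitting construction that converts arc capacities into demands at auxiliary nodes. For each arc $a = (i,j)$ of $G$ with capacity $c_a$, I would introduce a new node $w_a$ and replace $a$ by two uncapacitated arcs $(i, w_a)$ and $(j, w_a)$, both oriented into $w_a$. The resulting $\widehat{G}$ is bipartite, with the $n$ original nodes on the supply side and the $m$ new nodes $w_a$ on the demand side, yielding $N_1 = n$, $N_2 = m$, and $M = 2m$ arcs as required. I would define the excesses by $\widehat{v}_{w_a} := c_a$ at each new node and $\widehat{u}_i := b_i + \sum_{a = (k,i) \in E(G)} c_a$ at each original node $i$, so that $\widehat{b}$ is a linear combination of $b$ and $c$; summing over all nodes then shows that total supply equals total demand, since every $c_a$ contributes to exactly one supply term and exactly one demand term.

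The candidate bijection is then $\widehat{x}_{i \to w_a} := x_a$ and $\widehat{x}_{j \to w_a} := c_a - x_a$ for every arc $a = (i,j)$. Non-negativity of both new coordinates is precisely the capacity constraint $0 \le x_a \le c_a$, while conservation at $w_a$ reads $x_a + (c_a - x_a) = c_a = \widehat{v}_{w_a}$. At each original node $i$, the outflow computation
\[
\sum_{a=(i,j)} \widehat{x}_{i \to w_a} + \sum_{a=(k,i)} \widehat{x}_{i \to w_a} = \sum_{a=(i,j)} x_a + \sum_{a=(k,i)} (c_a - x_a) = b_i + \sum_{a=(k,i)} c_a = \widehat{u}_i
\]
confirms conservation in $\widehat{G}$. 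Conversely, starting from any feasible $\widehat{x}$, I would set $x_a := \widehat{x}_{i \to w_a}$; the relation $\widehat{x}_{j \to w_a} = c_a - x_a$ is forced by conservation at $w_a$, and the capacity bound $0 \le x_a \le c_a$ then follows from the non-negativity of the two arcs incident to $w_a$.

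Both maps are affine with integer coefficients, so they define mutually inverse affine bijections between the two feasibility polytopes that carry integer points to integer points; this yields both the bijection of integral flows and the polytope isomorphism, since an affine bijection of polytopes preserves the entire face lattice and, in particular, the $1$-skeleton. The only step with any real subtlety is verifying that the candidate map actually lands in the feasibility polytope of $\widehat{G}$, i.e., checking conservation at the original nodes and the global balance of $\widehat{b}$, but once the correct excess vector has been identified this reduces to the routine accounting displayed above.
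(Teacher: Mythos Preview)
Your proposal is correct and is essentially the same construction as the paper's: the same arc-splitting, the same excess vector $\widehat{b}_i = b_i + \sum_{\text{head}(a)=i} c_a$ at original nodes and $\pm c_a$ at the new nodes, and the same affine bijection $\widehat{x}_{i\to w_a}=x_a$, $\widehat{x}_{j\to w_a}=c_a-x_a$. The only presentational difference is that the paper packages the verification as a matrix computation---introducing slack variables to form an extended network matrix and then left-multiplying by an explicit unimodular $\begin{pmatrix} I & T_G \\ 0 & -I \end{pmatrix}$ to obtain $\Phi_{\widehat G}$---whereas you check the conservation constraints coordinate-wise; both arguments certify exactly the same integral affine isomorphism.
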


 \begin{figure}[h]
 \tikzset{> = latex}
	\centering
	
	\begin{tikzpicture}[scale=0.6]
		\tikzset{vertex/.style = {shape=circle, draw, minimum size=1.5em}}
		\coordinate[vertex] (v1) at (-2,0) {};
		\coordinate[vertex] (v2) at (2,0) {};
		\node at (v1) {\small $i$};
		\node at (v2) {\small $j$};
		\node at (-2,-1) {$b_i$};
		\node at (2,-1) {$b_j$};
		
		\draw[->] (v1)--(v2) node[midway, above] {$c_{ij}$};						
	\end{tikzpicture}
\hspace*{2cm}	
	\begin{tikzpicture}[scale=0.5]
		\tikzset{vertex/.style = {shape=circle, draw, minimum size=1.5em}}
		\coordinate[vertex] (v1) at (-3,0) {};
		\coordinate[vertex] (v2) at (3,0) {};
		\coordinate[vertex] (v3) at (0,1) {};
		\node at (v1) {\small $i$};
		\node at (v2) {\small $j$};
		\node at (v3) {\small $ij$};
		\node at (-3,-1) {$b_i$};
		\node at (3,-1) {$b_j+c_{ij}$};
		\node at (0,2) {$-c_{ij}$};
		
		\draw[->] (v1)--(v3) node[midway, above] {};						
		\draw[->] (v2)--(v3) node[midway, above] {};						
	\end{tikzpicture}
	\caption{Construction for Lemma \ref{reducetobipartite}.}
	\label{fig:bounded to unbounded}
\end{figure}


\proof  For the network $G$  with capacity vector $c,$ the flows are the
solutions of  \begin{equation}\label{eqn}\Phi_G  x \, = \, b , \quad 0 \leq x \leq c.\end{equation}
There is a clear bijection (a deletion of redundant dependent variables) between the solutions of
system (\ref{eqn}) and the solutions of

$$\left [\begin {array}{cc} {\it \Phi_G}&0\\\noalign{\medskip} I& I \end
{array}\right ]\left [\begin {array}{c} x\\\noalign{\medskip}y
\end {array}\right ] = \left [\begin {array}{c}
b\\\noalign{\medskip}c\end {array}\right ], \quad x, y \, \geq \,
0 .$$

The new enlarged matrix is called the
{\em extended network matrix}.  It is clear that the polytope of the original
network $G$ is isomorphic to the polytope of the extended network matrix.
This auxiliary matrix will be useful to prove our claim. From it we apply an
invertible linear map to the polytope as follows.

To the network $G$, with its set of
nodes $V$ and its set of arcs $E$, we have associated the new network
$\widehat{G}$ . The set of nodes of $\widehat{G}$ is the disjoint
union of the two sets $V$ and $E$ and the network $\widehat{G}$ is
obtained from $G$ by replacing each arc by two new arcs as illustrated
in Figure \ref{fig:bounded to unbounded}: that is to each $e\in E$ is associated
$f_1=(j,e)$ and $f_2=(i,e)$ where $e\in E$ is the common head of both of the new arcs $f_1, f_2$ 
and $i,j\in V$ are the tails. Thus $\widehat{G}$ is a directed graph,
with a set $\widehat{V}$ of $n+m$ nodes and a set $\widehat{E}$ of $2m$ 
arcs. We define a
new excess vector $\hat b\in \R^{n+ m}$. The projection of $\hat
b$ on $\R^n$ has coordinates $\hat b_i=b_i+\sum_{f\in E| {
\text{head}}(f)=i}{ c}(f)$. The projection of $\hat b$ on $\R^m$ is
the negative of the capacity vector $c$.

Now we describe the explicit isomorphism between the two polytopes by describing
a (special) linear map between the matrix representation of the old capacitated network and the new
uncapacitated bipartite network. 

Let $T_G\in \mathbb{R}^{n\times m}$ 
be the matrix with one column per arc and one row per node defined as follows. 
The column corresponding to an arc has just {\em one} nonzero entry: 
the {\em head} of the arc receives a $1$.  Then $\Phi_G+T_G$ is the matrix 
with one column per arc and just the {\em tail} of the arc receives a $1$. All
other entries are $0$. Consider the matrix transformation

$$\left [\begin {array}{cc} {\it I}&{\it
T_G}\\\noalign{\medskip} 0& -I \end {array}\right ] \left [\begin
{array}{cc} {\it \Phi_G}&0\\\noalign{\medskip} I& I \end
{array}\right ] =\left [\begin {array}{cc} {\it \Phi_G+T_G}\; & \;{\it
T_G}\\\noalign{\medskip} -I& -I \end {array}\right ]. $$ 

The right-hand side is equal to the incidence matrix $\Phi_{\widehat G}$ for the new network $\widehat G$. 

The $m$ first columns correspond to new arcs $f_1$, and the last $m$ columns
correspond to new arcs $f_2$. Solutions of
$$\left [\begin {array}{cc} {\it \Phi_G}&0\\\noalign{\medskip} I& I \end
{array}\right ]\left [\begin {array}{c} x\\\noalign{\medskip}y
\end {array}\right ] = \left [\begin {array}{c}
b\\\noalign{\medskip}c\end {array}\right ] $$ are  solutions of
the equation
$$ \Phi_{\widehat G} \left [\begin {array}{c} x\\\noalign{\medskip}y
\end {array}\right ]=\left [\begin {array}{cc} {\it I}&{\it T_G}\\\noalign{\medskip} 0& -I \end
{array}\right ]\left [\begin {array}{c} b\\\noalign{\medskip}c\end
{array}\right ]= \hat b.$$

Thus we obtain a linear unimodular bijection between feasible flows of the capacitated network $G$ and feasible flows of the uncapacitated network $\widehat G$. In particular, this implies that both polytopes have the same $1$-skeleton. Note that the isomorphism we provided implies that, if $x_{ij}$ was the value of the flow on arc $(i,j)$ of the original capacitated network, then the corresponding new flow in the uncapacitated network will give $y_{i (i,j)}=x_{ij}$ and $y_{j,(i,j)}=c_{ij}-x_{ij}$.

\qed

\vskip .3cm

\begin{example} In Figure \ref{fig:capnetwork-to-tp} we show an example of the transformation described in Lemma \ref{reducetobipartite}. We also show how to interpret the resulting uncapacitated network as a transportation problem (in this case missing several edges, which are forced by conditions of type $x_{ij}=0$). \hfill\qed

\begin{figure}[htb]
 \tikzset{> = latex}
	\centering
	\begin{tikzpicture}[scale=1.6]
		\tikzset{vertex/.style = {shape=circle, draw, minimum size=1.5em}}
		\coordinate[vertex] (c1) at (-1,0) {};
		\coordinate[vertex]  (c2) at (0,1) {};
		\coordinate[vertex]  (c3) at (0,-1) {};
		\coordinate[vertex]  (c4) at (1,0) {};
		
		\node at (c1) {a};
		\node at (c2) {b};
		\node at (c3) {c};
		\node at (c4) {d};
		
		\node[left] at (-0.1,0) {$5$};
		\node[right] at (0.1,0){$15$};
		
		\node at (-1.5,0) {$20$};
		\node at (0,1.4) {$0$};
		\node at (1.5,0) {$-20$};
		\node at (0,-1.4) {$0$};
			
		\draw[->] (c1) --  (c2)node[midway, left] {$10$}; 
		\draw[->] (c1)--(c3) node[midway, left] {$20$}; 
		\draw[->] (c2) [bend right = 10] to (c3)  ;
		\draw[->] (c2)--(c4)node[midway, right] {$30$}; 
		\draw[->] (c3)[bend right = 10] to (c2)  ;
		\draw[->] (c3)--(c4)node[midway, right] {$10$}; 
	\end{tikzpicture}
\vspace{0.5cm}

	\begin{tikzpicture}[scale=0.8]
		\tikzset{vertex/.style = {shape=circle, draw, minimum size=1.5em}}
		\coordinate[vertex] (c1) at (1.25,0) {};
		\coordinate[vertex]  (c2) at (3.75,0) {};
		\coordinate[vertex]  (c3) at (6.25,0) {};
		\coordinate[vertex]  (c4) at (8.75,0) {};
		
		\node at (c1) {a};
		\node at (c2) {b};
		\node at (c3) {c};
		\node at (c4) {d};
		
		\coordinate[vertex] (x1) at	(0,2) {};
		\coordinate[vertex]  (x2) at (2,2) {};
		\coordinate[vertex]  (x3) at (4,2) {};
		\coordinate[vertex]  (x4) at (6,2) {};
		\coordinate[vertex]  (x5) at (8,2) {};
		\coordinate[vertex]  (x6) at (10,2) {};
		
		\node at (x1) {\small {ab}};
		\node at (x2) {\small {ac}};
		\node at (x3) {\small {bc}};
		\node at (x4) {\small {bd}};
		\node at (x5) {\small {cb}};
		\node at (x6) {\small {cd}};
		
		\node at (1.25,-0.75) {$20$};
		\node at (3.75,-0.75) {$25$};
		\node at (6.25,-0.75) {$25$};
		\node at (8.75,-0.75) {$20$};
		\node at (0,2.75) {$-10$};
		\node at (2,2.75) {$-20$};
		\node at (4,2.75) {$-5$};
		\node at (6,2.75) {$-30$};
		\node at (8,2.75) {$-15$};
		\node at (10,2.75) {$-10$};	
			
		\draw[->] (c1)--(x1);
		\draw[->] (c1)--(x2);
		\draw[->] (c2)--(x1);
		\draw[->] (c2)--(x3);
		\draw[->] (c2)--(x4);
		\draw[->] (c2)--(x5);
		\draw[->] (c3)--(x2);
		\draw[->] (c3)--(x3);
		\draw[->] (c3)--(x5);
		\draw[->] (c3)--(x6);
		\draw[->] (c4)--(x4);
		\draw[->] (c4)--(x6);
	\end{tikzpicture}
	\vspace{0.5cm}

	\begin{tikzpicture}[scale=0.8]
		\coordinate (c1) at (1.25,0);
		\coordinate (c2) at (3.75,0);
		\coordinate (c3) at (6.25,0);
		\coordinate (c4) at (8.75,0);
		
		\coordinate (x1) at	(0,2);
		\coordinate (x2) at (2,2);
		\coordinate (x3) at (4,2);
		\coordinate (x4) at	(6,2);
		\coordinate (x5) at (8,2);
		\coordinate (x6) at (10,2);
		
		\draw [fill, black] (c1) circle (0.1cm);
		\draw [fill, black] (c2) circle (0.1cm);
		\draw [fill, black] (c3) circle (0.1cm);
		\draw [fill, black] (c4) circle (0.1cm);
		\draw [fill, black] (x1) circle (0.1cm);
		\draw [fill, black] (x2) circle (0.1cm);
		\draw [fill, black] (x3) circle (0.1cm);
		\draw [fill, black] (x4) circle (0.1cm);
		\draw [fill, black] (x5) circle (0.1cm);
		\draw [fill, black] (x6) circle (0.1cm);

		\node[below] at (c1) {$u_1=20$};
		\node[below] at (c2) {$u_2=25$};
		\node[below] at (c3) {$u_3=25$};
		\node[below] at (c4) {$u_4=20$};
		\node[above] at (x1) {$v_1=10$};
		\node[above] at (x2) {$v_2=20$};
		\node[above] at (x3) {$v_3=5$};
		\node[above] at (x4) {$v_4=30$};
		\node[above] at (x5) {$v_5=15$};
		\node[above] at (x6) {$v_6=10$};	
			
		\draw (c1)--(x1);
		\draw (c1)--(x2);
		\draw (c2)--(x1);
		\draw (c2)--(x3);
		\draw (c2)--(x4);
		\draw (c2)--(x5);
		\draw (c3)--(x2);
		\draw (c3)--(x3);
		\draw (c3)--(x5);
		\draw (c3)--(x6);
		\draw (c4)--(x4);
		\draw (c4)--(x6);
	\end{tikzpicture}

\caption{From a capacitated network to an uncapacitated bipartite network to a transportation problem.}
	\label{fig:capnetwork-to-tp}
\end{figure}
	
\end{example}

\noindent \textit{\bf\em Proof of Theorem  \ref{main-network}.}  Assume that Theorem \ref{main thm} holds, that is, the Hirsch conjecture holds for all the faces of transportation polytopes. We show that this implies the Hirsch conjecture for network-flow polytopes. 

First note that every uncapacitated network-flow problem (or network-flow problem with some uncapacitated arcs), whose solution space is a polytope, is identical to a \emph{capacitated} network-flow problem: For each arc $(i,j)$ with infinite capacity we replace the infinite capacity by a finite capacity that is \emph{strictly} larger than the maximum value of the variable $x_{ij}$. 
This preserves the same polytope. In particular it preserves the number of inequalities that can be satisfied with equality and thus the number of facets (as the new inequalities are never met with equality). 
Thus we can restrict our discussion to capacitated network-flow polytopes. 

The isomorphism  between a capacitated network-flow problem with $n$ nodes and $m$ arcs and the bipartite uncapacitated network with $N_1=n$ supply nodes and $N_2=m$ demand nodes provided by Lemma \ref{reducetobipartite} shows that the two polytopes have the same facets and dimension, as well as the same diameter.  If one satisfies the Hirsch conjecture the other one does too. Now note that the faces of a transportation polytope are described by a set of edges with flow fixed to zero (or, in other words, a set of edges that do not exist). 
Thus the bipartite uncapacitated network constructed in Lemma \ref{reducetobipartite} is in fact a face of a transportion polytope. Therefore, Theorem \ref{main thm} implies that the Hirsch conjecture holds for capacitated network-flow problems, too. 

To see tightness of the Hirsch bound, recall there exist Hirsch-sharp transportation polytopes \cite{KYK84}. It is also easy to give Hirsch-sharp instances for general networks. 

It remains to certify the diameter bound of  $m+n-1$ to the original network-flow polytope. The number of facets $f$ of a capacitated network with $m$ arcs and  $n$ nodes is bounded above by $2m$, the dimension $d$ of the polytope is $2m-(m+n-1)$ (because the rank of the node-arc incidence matrix is $m+n-1$).
As the diameter of the polytope satisfies the Hirsch bound of $f-d$, it also is bounded above by $m+n-1\geq f-d$. Of course, in many instances, the diameter is much smaller.
\qed

\section{Algorithm to prove Theorem \ref{main thm}}\label{sec:algo}


We now present an algorithm that constructs a walk from an initial tree $O$ to a final tree $F$ on the $1$-skeleton of a non-degenerate transportation polytope. 
The walk is fully specified by the corresponding concrete finite sequence of trees, starting with $O$, ending with $F$, where each tree differs 
from the previous one by exactly one edge. We will prove the sequence has at most $N_1+N_2-1-\crit$ steps, which proves the validity of the Hirsch 
conjecture for all transportation polytopes. Before we give a pseudo-code description of the algorithm, let us explain its most important features 
in some detail. The algorithm is based on the following important principle: 


\begin{center}\em
Construct a sequence of trees, starting from a tree $O$ and ending at a tree $F$, by successively inserting edges contained in $F$ and such that no previously inserted edge is ever deleted.
\end{center}

Note  that in Example \ref{THEex1}, we do not delete an edge that was inserted in a previous step, but it is fine to delete edges of $F$ otherwise. 
To keep track of the edges that may not be deleted, we \emph{shade} them. 
There will be two general situations in which we shade an edge:
\begin{itemize}
\item Whenever we \emph{insert} an edge from $F$ into the current tree, we \emph{shade} it. 
\item We may \emph{shade an edge from $F$ that already exists} in the current tree.
\end{itemize}
Note that only the first case corresponds to a step on the skeleton of the transportation polytope from a tree $C$ to a tree $\suc$. 
When inserting an edge into a tree, we also \emph{delete an unshaded edge}. In contrast, we keep the current tree in the second case. 
We will often refer to both of the above situations at the same time. We then write \emph{``(insert and) shade an edge''}. 

In our discussion, we will frequently refer to a tree and the set of shaded edges in the tree at the same time. To this end, we introduce the following two terms:

\begin{itemize}
	\item We call a tree with a (not necessarily strict) subset of edges shaded a \emph{partially shaded tree}.
	\item We call a tree with all edges shaded a \emph{fully shaded tree}. As we only shade edges from $F$ in our algorithm, a fully shaded tree is in fact the tree $F$. 
\end{itemize}

The most important aspect of our algorithm is the order in which edges are (inserted and) shaded; recall again Example \ref{THEex1}. Based on the edge that we insert in the current tree, the margins of the transportation polytope uniquely define which edge is deleted. The order of insertion is determined by the following labeling of the edges in $F$, in which every edge is \emph{labeled $+$ or $-$}. 

\begin{itemize}
	 \item Choose an arbitrary demand node $\delta^*$ and, in $F$, consider all paths starting at $\delta^*$.
	\item Label the edges on these paths alternatingly $+$ and $-$, beginning with a $+$.
\end{itemize}

\begin{figure}[H]
        \centering
            \begin{tikzpicture}[scale=0.5]
\coordinate(v1) at (0,1.5) ;
\coordinate(v2) at (2,-1);
\draw [fill, black] (v1) circle (0.1cm);
\node[above] at (v1) {$\di^*$};
\coordinate (v3) at (0,-1) ;
\coordinate (v4) at (-2,-1) ;
\coordinate (v5) at (3,1.5) ;
\coordinate (v6) at (0,1.5) ;
\draw (v1) -- (v2)  node[midway] {$\;\;\; +$};
\draw  (v1) -- (v3)node[midway] {$\;\;\;  +$};
\draw (v1) -- (v4)node[midway] {$+ \;\;\;$};
\draw  (v2) -- (v5) node[midway] {$\;\; -$};
\coordinate (v8) at (-2,1.5) ;
\coordinate (v7) at (-1,1.5) ;
\coordinate (v9) at (-2,1.5) ;
\draw  (v4) -- (v9)node[midway] {$- \;\;$};
\coordinate (v10) at (-5,1.25) ;
\draw  (v4) -- (v10)node[midway] {$-\;\;\; $};
\coordinate (v11) at (-4,-1) ;

\coordinate (v13) at (-6,-1) ;
\draw  (v10) -- (v11)node[midway] {$ +\;\;\;$};
\draw  (v13) -- (v10)node[midway] {$+\;\;\; $};
\coordinate (v15) at (-5,2) ;
\coordinate (v16) at (-4,2) ;

\coordinate (v17) at (-6,3) ;
\coordinate (v20) at (-10,-1) ;
\coordinate (v18) at (-7,1.25) ;
\coordinate (v19) at (-8,-1) ;
\draw (v17) -- (v4) node[midway] {$\;\;\;\; -$};
\draw  (v18) -- (v13) node[midway] {$-\;\;\;$};
\draw (v18) -- (v19)node[midway] {$+\;\;\;$};
\draw (v17) -- (v20)node[midway] {$+\;\;\;\;$};
\coordinate (v12) at (-6,1.5) ;
\coordinate (v24) at (-7,1.5) ;
\coordinate (v21) at (-8,1.5) ;
\coordinate (v22) at (-10,1.5) ;
\coordinate (v23) at (-11,1.5) ;
\coordinate (v14) at (-9,1.5) ;
\draw  (v20) -- (v22)node[midway] {$-\;\;$};
\draw  (v20) -- (v23) node[midway] {$-\;\;\;$};

\node at (5.5,1.5) {demand nodes};
\node at (5.5,-1) {supply nodes};
			\end{tikzpicture} \caption{A labeling of the edges in $F$.} \label{fig: labels}
\end{figure}


\noindent Figure \ref{fig: labels} is an example for such a labeling. Note that in particular the following properties hold: (a) each supply node is incident to exactly one $+$edge, 
(b) each demand node $\neq \delta^*$ is incident to exactly one $-$edge, and (c) $\delta^*$ is only incident to $+$edges.

These labels for edges in $F$ will not change during our algorithm and can be preprocessed. The main part of the algorithm then begins with an original tree $O$ and all edges unshaded. In each iteration, we (insert and) shade an edge from the final tree $F$. When inserting an edge, the margins tell us which edge will be deleted. We proceed like this until we reach $F$ with all edges shaded.
 
We now take a closer look at a shading step (an iteration of the algorithm). Let $C$ be a partially shaded tree.
We take a supply node $\si$, one that satisfies a special property in $C$ to be discussed in Section \ref{sec:correct}. We consider the edges that are incident to $\si$ in $F$ and that are still unshaded. We either shade one of these edges from  $C\cap F$, or we insert an edge from $F\backslash C$ and then shade it. 
Thus in each iteration we shade an edge and obtain a succeeding tree $\suc$ with one additional edge shaded. The decision of which edge to (insert and) 
shade is based on the above labeling: For each supply node, its unique incident $+$edge will be the last edge to be shaded. 

Algorithm \ref{algo:hirschwalk} gives a description in pseudo-code of the method: Steps 1 and 2 describe the preprocessing of labeling the edges in $F$ and the initialization of the algorithm. The main loop is stated in Step 3 and we refer to each run through the loop as an {\em iteration} of the algorithm. Note that the actual walk on the skeleton greatly depends on the labeling we fix in the beginning. The chosen labeling has an impact on the if-else clauses in the main loop and also affects the subroutine (Algorithm \ref{algo:newsupply}). Choosing a different labeling, i.e., selecting a
different demand node $\delta^*$ (Step 1 of Algorithm  \ref{algo:hirschwalk}) may change the walk significantly; this might even change the number of steps (the length of the walk on the skeleton). The same holds for choosing a different initial supply node $\si$ (Step 2 of Algorithm  \ref{algo:hirschwalk}).  Thus the constructed walk is not necessarily of minimum length.

\begin{algorithm}[H]
\vspace{0.2cm}

 {\bf Input}: Trees $O$ and $F$ corresponding to vertices of an $N_1 {\times} N_2$-transportation polytope $\TP$
\vspace{0.1cm}

\noindent {\bf Output}:  A finite sequence $S$ of trees corresponding to a walk from $O$ to $F$ on the skeleton of  $\TP$. The number of steps described by $S$ is at most $N_1+N_2-1-\mu$, 
where $\mu$ is the number of critical pairs of  $\TP$

  \begin{enumerate}
 \item Choose an arbitrary demand node $\delta^*$ and consider all paths in $F$ starting at $\delta^*$. 	Label the edges on these paths alternatingly $+$ and $-$, beginning with a $+$.
\item Choose an arbitrary supply node $\si$. All edges in $O$ are unshaded. Start with sequence $S$ only containing $O$. Set $C=O$, the current tree of the walk. 
\item {\bf repeat} 
\begin{itemize}
\item {\bf If} {\em there is an unshaded $-$edge incident to $\si$ in $F$} {\bf then}\\
\hspace*{0.66cm} (insert and) shade an unshaded $-$edge $\sealg=\{\sigma,\delta\}$ incident to $\si$ in $F$ into $C$ to obtain \\
\hspace*{0.66cm} the succeeding tree $\suc$\\
\hspace*{0.66cm} {\bf if} {\em the edge $\sealg$ was inserted (not only shaded)} {\bf then}\\
\hspace*{0.66cm} \hspace*{0.66cm} append  $\suc$ to the end of $S$,\\
\hspace*{0.66cm} \hspace*{0.66cm} set $\di'$ as the demand node incident to the deleted edge\\
\hspace*{0.66cm} {\bf else}\\
\hspace*{0.66cm} \hspace*{0.66cm} set $\di':=\di$ 
\item {\bf Else}\\
\hspace*{0.66cm}  (insert and) shade the unique $+$edge $\sealg=\{\sigma,\delta\}$ incident to $\si$ in $F$ into $C$ to obtain\\
\hspace*{0.66cm} the succeeding tree $\suc$	\\
\hspace*{0.66cm}  {\bf if} {\em the edge $\sealg$ was inserted  (not only shaded)} {\bf then}\\
\hspace*{0.66cm} \hspace*{0.66cm} append  $\suc$ to the end of $S$\\
\hspace*{0.66cm} \hspace*{0.66cm} set $\di'$ as the demand node incident to the deleted edge\\
\hspace*{0.66cm} {\bf else}\\
\hspace*{0.66cm} \hspace*{0.66cm} set $\di':=\di$.\\
\item {\bf If} {\em $\delta^*$ is only incident to shaded edges in $\suc$} {\bf then} \\
\hspace*{0.66cm} \hspace*{0.66cm} {\bf return }$S$ and {\bf stop}
\item Update $\si$ by calling \emph{Algorithm \ref{algo:newsupply}} with input $\di'$ and $\suc$
\item Set $C:=\suc$
\end{itemize} 
{\bf end repeat}
\end{enumerate}
  \caption{Hirsch-walk in  a Transportation Polytope}
   \label{algo:hirschwalk}
\end{algorithm}

\begin{algorithm}[H]
\vspace{0.2cm}

 {\bf Input}: Demand node $\di'$ and a partially shaded tree $\suc$ (with $+$/$-$labels for the shaded edges), corresponding to a vertex of  $\TP$ 
 
\vspace{0.1cm}

\noindent {\bf Output}:  A new supply node $\si$ in $\suc$

\begin{itemize}
\item {\bf if} {\em there is an unshaded edge $\{\si',\di'\}$ in $\suc$}  {\bf then} \\
\hspace*{0.66cm}{\bf return }$\si=\si'$ 
\item {\bf else}\\
\hspace*{0.66cm}set $\si''$ as the supply node of the unique $-$edge  $\{\si'',\di'\}$ incident to $\di'$\\
\hspace*{0.66cm}{\bf return }$\si=\si''$ 
\end{itemize} 
  \caption{Find New Supply Node $\si$.}
  \label{algo:newsupply}
\end{algorithm}

\noindent Let us illustrate how the algorithm works by applying it to  Example \ref{THEex1}. 
\begin{example}[Example \ref{THEex1} revisited] \label{ex: steps of alg}
We show a run of Algorithm \ref{algo:hirschwalk} for the instance from Example \ref{THEex1}. As before, we want to connect $O$ and $F$ as depicted in Figure \ref{fig: O and F} with a walk on the skeleton of the transportation polytope. Figure \ref{fig: O and F} also shows a choice of $\delta^*$ and the corresponding labeling of the edges in $F$ according to Step 1 in Algorithm \ref{algo:hirschwalk}.

	
\begin{figure}
		\centering
			\begin{tikzpicture}[scale=0.55]
					\coordinate (c1) at (1,0);
					\coordinate (c2) at (3,0);
					\coordinate (x1) at	(0,2);
					\coordinate (x2) at (2,2);
					\coordinate (x3) at (4,2);
				  \draw (c1)--(x2) ;
				  \draw (c2)--(x2) ;
					\draw (c1)--(x1);
				  \draw (c2)--(x3) ;			  
					\node at (2,-1.5) {tree $O$};
			\end{tikzpicture}
\hspace*{2cm}
			\begin{tikzpicture}[scale=0.55]
					\coordinate (c1) at (1,0);
					\coordinate (c2) at (3,0);
					\coordinate (x1) at	(0,2);
					\coordinate (x2) at (2,2);
					\coordinate (x3) at (4,2);
%
					\node[above] at (x1) {$\delta^*$};
				  \draw (c1)--(x3) node[near end] {$-$};
				  \draw (c1)--(x2)node[near end] {$+\; $};
				  \draw (c2)--(x1)node[near end] {$+$};
					\draw (c2)--(x2)node[near end] {$\; -$};
					\node at (2,-1.5) {tree $F$};
			\end{tikzpicture}
		\caption{Initial tree $O$ with all edges unshaded and final tree $F$ with edge labels.} \label{fig: O and F}

\end{figure}

	\begin{figure}
		\centering
			\begin{tikzpicture}[scale=0.55]
					\coordinate (c1) at (1,0);
					\coordinate (c2) at (3,0);
					\coordinate (x1) at	(0,2);
					\coordinate (x2) at (2,2);
					\coordinate (x3) at (4,2);
					\node[below] at (c1) {$\si$\color{white}{$'$}};
					\node[below] at (c2) {$\si'$};
					\node[above] at (x2) {$\di'$};
				  \node[above] at (x3) {$\di$};
				  \draw (c1)--(x2) node[midway] {$\backslash$} ;
				  \draw (c2)--(x2) ;
					\draw (c1)--(x1);
				  \draw (c2)--(x3) ;
				  \draw[dashed, line width = 1.5] (c1)--(x3);			  
					\node at (2,-1.5) {iteration 1};
					\node at (2,-2.5) {pivot 1};
			\end{tikzpicture}
\hspace*{1cm}
			\begin{tikzpicture}[scale=0.55]
					\coordinate (c1) at (1,0);
					\coordinate (c2) at (3,0);
					\coordinate (x1) at	(0,2);
					\coordinate (x2) at (2,2);
					\coordinate (x3) at (4,2);
					\node[below] at (c2) {$\si=\si''$};
				  \node[above] at (x2) {$\di=\di'$};
				  \draw (c2)--(x2) ;
					\draw (c1)--(x1);
				  \draw (c2)--(x3) ;
				  \draw[line width = 1.5] (c1)--(x3);			  
				  \draw[dashed, line width = 1.5] (c2)--(x2);			  
					\node at (2,-1.5) {iteration 2};
					\node at (2,-2.8) {$ $};
			\end{tikzpicture}
\hspace*{1cm}
			\begin{tikzpicture}[scale=0.55]
					\coordinate (c1) at (1,0);
					\coordinate (c2) at (3,0);
					\coordinate (x1) at	(0,2);
					\coordinate (x2) at (2,2);
					\coordinate (x3) at (4,2);
					\node[below] at (c2) {$\si$\color{white}{$'$}};
					\node[below] at (c1) {$\si''$};
					\node[above] at (x3) {$\di'$};
				  \node[above] at (x1) {$\di$};
				  \draw[line width = 1.5] (c2)--(x2) ;
					\draw (c1)--(x1);
				  \draw (c2)--(x3) node[midway] {$\backslash$} ; ;
				  \draw[line width = 1.5] (c1)--(x3);			  
				  \draw[dashed, line width = 1.5] (c2)--(x1);			  
					\node at (2,-1.5) {iteration 3};
					\node at (2,-2.5) {pivot 2};
			\end{tikzpicture}
\hspace*{1cm}
			\begin{tikzpicture}[scale=0.55]
					\coordinate (c1) at (1,0);
					\coordinate (c2) at (3,0);
					\coordinate (x1) at	(0,2);
					\coordinate (x2) at (2,2);
					\coordinate (x3) at (4,2);
					\node[below] at (c1) {$\si$\color{white}{$'$}};
					\node[above] at (x1) {$\di'=\di^*$};
				  \node[above] at (x2) {$\di$};
				  \draw[line width = 1.5] (c2)--(x2) ;
					\draw (c1)--(x1) node[midway] {$\slash$} ;;
				  \draw[line width = 1.5] (c1)--(x3);
				  \draw[ line width = 1.5] (c2)--(x1);
				  \draw[dashed, line width = 1.5] (c1)--(x2);		  
					\node at (2,-1.5) {iteration 4};
					\node at (2,-2.5) {pivot 3};
			\end{tikzpicture}
		\caption{The iterations of Algorithm \ref{algo:hirschwalk} for the input from Figure \ref{fig: O and F}.} \label{fig: ex algo}

\end{figure}


The algorithm inserts and shades edges in four iterations of Algorithm \ref{algo:hirschwalk}, which are depicted in Figure \ref{fig: ex algo}.  As we will see, iterations 1,3, and 4 correspond to actual steps on the skeleton of the underlying transportation polytope. In contrast, iteration 2 illustrates a situation where we remain at the current tree. The names of supply nodes and demand nodes in the figure correspond to the notation used in Algorithms \ref{algo:hirschwalk} and \ref{algo:newsupply}. In particular, recall that in each iteration we (insert and) shade an edge incident to the carefully chosen current supply node $\si$. Further, recall that $\di$ refers to the demand node of this edge. Let us follow a run of Algorithm \ref{algo:hirschwalk} for this example in some detail. The reader should keep in mind that in all calculations we use the margins of Example \ref{THEex1} where supply  nodes (bottom row) have supply $3$ and demand nodes (top row) have demand $2$.

In the first iteration, the node $\si$ is an arbitrary supply node chosen in Step 2 of the algorithm. We begin with all edges unshaded. By inserting and shading the $-$edge $\{\si,\di\}$, we delete the edge $\{\si,\delta'\}$, which is an edge incident to $\di'$. It is important to note that the margins  dictate which edge is deleted after one is inserted. The demand node $\di'$ is the input for the first call of the subroutine to find a new supply node (Algorithm \ref{algo:newsupply}). There still is the unshaded edge $\{\di',\si'\}$ incident to $\di'$ in the succeeding tree, so the subroutine returns the supply node $\si'$ and we continue with it as the new supply node $\si$. 


In the second iteration, there is a $-$edge $\{\si,\di\}$ incident to the new $\si$ in $F$ that already exists in the current tree, but is still unshaded. We choose $e=\{\si,\di\}$ and only shade it (no insertion). 
Therefore we have $\di'=\di$. Since there is no unshaded edge incident to $\di'$, the update subroutine returns the supply node $\si''$ of the unique $-$edge incident to $\di'$ as the new $\si$ for the next iteration. (In this case, it is the same supply node as for the previous iteration.)

In the third iteration, there is only one edge $\{\si,\di\}$ (a $+$edge) left to shade incident to the new $\si$. We insert and shade it. Once more dictated by the margins at nodes, this forces the deletion of the unshaded edge incident to $\di'$. There are no unshaded edges incident to $\di'$ and thus we continue with $\si''$, the supply node of the unique $-$edge incident to $\delta'$. We update $\si:=\si''$.

In the fourth iteration, we again insert the only unshaded edge $\{\si,\di\}$ (a $+$edge) incident to $\si$. Given the margins of the instance, this forces the deletion of the unshaded edge incident to $\di'=\di^*$. Now $\di^*$ is only incident to shaded edges in the new tree $\suc$. This is the stopping criterion for Algorithm \ref{algo:hirschwalk}. We have reached the final tree $F$ with all edges shaded. \hfill\qed
\end{example}


\section{Proof of Correctness}\label{sec:correct}

We now turn to the proof of correctness for Algorithm \ref{algo:hirschwalk}.  First, we introduce notation and two 
properties for trees, respectively their nodes, that are at the core of the proof.

\subsection{Well-Connectedness and the UNO and SIN properties}

We distinguish two states for each node and shaded edge in the current tree. 

\begin{definition}
Let $C$ be the partially shaded current tree and $F$ the final tree. 

We say a supply node  $\si$ is \emph{well-connected} if all edges that are incident to $\si$ in 
$F$ exist in the current tree $C$ and are shaded ('nothing left to insert'). Otherwise it is \emph{open}. 

A demand node $\di$ is \emph{well-connected} if it is not incident to an unshaded edge in $C$ ('nothing to delete'). 
Otherwise it is \emph{open}.

A shaded edge incident to at least one well-connected node is a \emph{well-connected edge}. 
\end{definition}

Note that to detect well-connectedness during a run of Algorithm \ref{algo:hirschwalk}, it is enough to consider the current tree and the edge labels from the final tree $F$. 
For the demand nodes, this is obvious. A supply node is well-connected if and only if it is incident to a shaded $+$edge. This is because 
the Algorithm  \ref{algo:hirschwalk} first (inserts and) shades all $-$edges incident to a supply node and only then the unique $+$edge. Further, shaded edges are never deleted.

The well-connected edges together with their incident nodes form an important structure for our proofs.

\begin{definition}
A \emph{well-connected component} in a partially shaded tree $C$ is a connected subgraph induced by the well-connected edges. Further, 
any node not incident to a well-connected edge forms a component by itself.
\end{definition}

The well-connected components are exactly the connected components of the forest we would obtain deleting all edges that are not well-connected 
from the current tree. Note that every node and every well-connected edge belongs to a unique well-connected component. Also, every shaded $+$edge 
belongs to some well-connected component (its supply node is well-connected). However, this is not always the case for shaded $-$edges as its supply 
and demand node might both be open (edges to insert / unshaded edges); see  Example \ref{ex: components} for a illustration of these concepts.

In what follows we will simply use ``component(s)'' when we talk about the well-connected components. 

\begin{definition}
We say a component of a partially shaded tree $C$ satisfies \emph{property  (UNO)} 
(\emph{un}ique \emph{o}pen node property)  if it contains a unique  open node. 
(UNO) holds in a partially shaded tree $C$ if all its components satisfy (UNO). 
\end{definition}

We will show that Algorithm \ref{algo:hirschwalk} always preserves property (UNO) until we reach the final tree $F$ with 
all edges shaded and all nodes well-connected (see Lemma \ref{lem: done}).  This property is crucial for our arguments that follow.

%
%
%

\begin{example}[UNO]\label{ex: components}
In this example, we illustrate several well-connected components and their open/well-connected nodes.  The bold edges are shaded, ovals indicate the components. Recall that all shaded edges are contained in the final tree $F$ and thus have a $+$ or $-$ label. A supply node is well-connected if and only if it is incident to a shaded $+$edge, while a demand node is well-connected if and only if there are no unshaded edges incident to this node.

In the left-hand component in Figure \ref{fig: ex1}, the supply node $\si$ is open as there is no shaded $+$edge incident to $\si$ and therefore there is an edge left to insert. All  other nodes are well-connected. In the right-hand component, the demand node $\di$ is the unique open node as it is incident to an unshaded edge. In particular, both components satisfy (UNO). 
\begin{figure}[H]
\center
\begin{tikzpicture}[scale= 0.6]
\coordinate (v1) at (0,2);
\coordinate (v2) at (1,0);
\coordinate (v3) at (2,2);
\coordinate (v4) at (2,0);
\coordinate (v5) at (3,0);
\coordinate (v6) at (4,2);
\coordinate (v7) at (4,0);

\draw (v2)--(0.7,-0.7);
\draw (v4)--(1.7,-0.7);
\draw (v4)--(2.3,-0.7);
\draw (v5)--(3.3,-0.7);
\draw (v7)--(4.3,-0.7);

\draw[line width = 1.5] (v1)--(v2) node[midway] {$- \;\;$};
\draw[line width = 1.5] (v2)--(v3) node[midway] {$- \;\; $};
\draw[line width = 1.5] (v3)--(v4) node[midway] {$ \;\;+$};
\draw[line width = 1.5] (v3)--(v5) node[midway] {$ \;\;\ +$};
\draw[line width = 1.5] (v5)--(v6) node[midway] {$ -\;\;$};
\draw[line width = 1.5] (v6)--(v7) node[midway] {$ \;\;\; +$};
\node[below] at (v2) {$\; \si$};
\draw[line width = 0.05]  (2.1, 1.1) ellipse (2.9 and 1.7);
\end{tikzpicture}
%
%
\hspace*{2cm}
\begin{tikzpicture}[scale= 0.6]
\coordinate (v1) at (0,2);
\coordinate (v2) at (1,0);
\coordinate (v3) at (2,2);
\coordinate (v4) at (2,0);
\coordinate (v5) at (3,0);
\coordinate (v6) at (3.8,2);
\coordinate (v7) at (4.8,2);

\draw (0.2,0)--(-0.1,-0.7);
\draw (v2)--(0.7,-0.7);
\draw (v4)--(1.7,-0.7);
\draw (v4)--(2.3,-0.7);
\draw (v5)--(3.3,-0.7);

\draw[line width = 1.5] (v1)--(0.2,0) node[midway] {$+ \;\;\;$};
\draw[line width = 1.5] (v1)--(v2) node[midway] {$ \;\;\;+$};
\draw[line width = 1.5] (v2)--(v3) node[midway] {$- \;\; $};
\draw[line width = 1.5] (v3)--(v4) node[midway] {$ \;\;+$};
\draw[line width = 1.5] (v3)--(v5) node[midway] {$ \;\;\ +$};
\draw[line width = 1.5] (v5)--(v6) node[midway] {$ -\;\;$};
\draw[line width = 1.5] (v5)--(v7) node[midway] {$ \;\;\;\ -$};
\draw (v1)--(-1.5,0);

\node[above] at (v1) {$\di$};
\draw[line width = 0.05]  (2.2, 1.35) ellipse (3.2 and 1.9);
\end{tikzpicture}\caption{Components with unique open nodes $\si$ and $\di$, respectively.} \label{fig: ex1}
\end{figure}

Figure \ref{fig: ex2} illustrates a configuration in which two components are connected by a shaded $-$edge $\{\si,\di\}$. 
This edge is not well-connected because both endpoints are open: $\si$ is not incident to a $+$edge while $\di$ is incident to 
an unshaded edge. Note that (UNO) holds in both components.
\begin{figure}[H]
\center
\begin{tikzpicture}[scale= 0.6]
\coordinate (v1) at (-0.5,2);
\coordinate (v2) at (0,0);
\coordinate (v3) at (2.2,2);
\coordinate (v4) at (1.3,0);
\coordinate (v5) at (3,0);
\coordinate (v6) at (4,2);
\coordinate (v7) at (4,0);

\draw (v2)--(-0.3,-0.7);
\draw (v5)--(3.3,-0.7);
\draw (v7)--(4.3,-0.7);

\draw[line width = 1.5] (v1)--(v2) node[midway] {$- \;\;$};
\draw[line width = 1.5] (v2)--(v3) node[midway] {$- \;\; $};
\draw (v3)--(v4);
\draw[line width = 1.5] (v3)--(v5) node[midway] {$ \;\; +$};
\draw[line width = 1.5] (v5)--(v6) node[midway] {$ -\;\;$};
\draw[line width = 1.5] (v6)--(v7) node[midway] {$ \;\;\; +$};
\node[below] at (v2) {$\; \si$};
\node[above] at (v3) {$\di$};

\draw[line width = 0.05]  (-0.2, 1) ellipse (0.8 and 1.5);
\draw[line width = 0.05]  (3.2, 1.1) ellipse (1.7 and 1.5);
\end{tikzpicture}\caption{Two components satisfying (UNO), connected by a shaded edge.} \label{fig: ex2}
\end{figure}


Finally, Figure \ref{fig: no (UNO)} depicts two components that do not satisfy (UNO): They have two open nodes each, 
$\si$ and $\di$, respectively $\di$ and $\di'$. 
Observe that here all shaded edges are well-connected as they are incident to a well-connected supply or demand node. 

\begin{figure}[H]
\center
\begin{tikzpicture}[scale= 0.6]
\coordinate (v1) at (0,2);
\coordinate (v2) at (1,0);
\coordinate (v3) at (2,2);
\coordinate (v4) at (4,-1);
\coordinate (v5) at (3,0);
\coordinate (v6) at (4,2);

\draw[line width = 1.5] (v1)--(v2) node[midway] {$- \;\;\; $};
\draw[line width = 1.5] (v2)--(v3) node[midway] {$- \;\; $};
\draw (v6)--(v4);
\draw[line width = 1.5] (v3)--(v5) node[midway] {$ \;\;\ +$};
\draw[line width = 1.5] (v5)--(v6) node[midway] {$\; \;\; -$};
\node[below] at (v2) {$\si$};
\node[above] at (v6) {$\di$};

\draw[line width = 0.05]  (2, 1.1) ellipse (3 and 1.5);
\end{tikzpicture}
\hspace*{2cm}
\begin{tikzpicture}[scale= 0.6]
\coordinate (v1) at (0,2);
\coordinate (v2) at (1,0);
\coordinate (v3) at (2,2);
\coordinate (v4) at (2,-1);
\coordinate (v5) at (3,0);
\coordinate (v6) at (4,2);

\draw[line width = 1.5] (v1)--(v2) node[midway] {$+ \;\;\; $};
\draw[line width = 1.5] (v2)--(v3) node[midway] {$- \;\; $};
\draw (v3)--(v4);
\draw (v1)--(-0.5,-1);
\draw[line width = 1.5] (v3)--(v5) node[midway] {$ \;\;\ +$};
\draw[line width = 1.5] (v5)--(v6) node[midway] {$\; \;\; -$};
\node[above] at (v1) {$\di$};
\node[above] at (v3) {$\di'$};

\draw[line width = 0.05]  (2, 1.1) ellipse (3 and 1.5);
\end{tikzpicture}
\caption{Components with two open nodes each ($\si$ and $\di$ / $\di$ and $\di'$).} \label{fig: no (UNO)}
\end{figure}
\vspace*{-0.75cm}\hfill\qed
\end{example}

Besides (UNO), we need a second property. It is concerned with the supply node $\si$ that is incident to the edge we (insert and) shade. We distinguish odd and even edges 
with respect to the node $\sigma$ in the current tree $C$: Every edge of the current tree $C$ lies on a unique path starting at 
supply node $\sigma$. We number the edges on these paths, where the edges incident to $\sigma$ are the first edges. Then an edge is \emph{odd} with respect to $\si$ if 
it has an odd number, otherwise it is \emph{even}. In particular, these paths alternate between odd and even edges. The property (SIN) now imposes a condition 
on the odd edges on the paths starting at $\si$.

\begin{definition}
A supply node $\si$ satisfies \emph{property (SIN)} (\emph{s}upply node \emph{in}sertion property) in a partially shaded tree $C$ if all edges that are odd with respect 
to $\si$ are unshaded edges or (shaded) $-$edges incident to well-connected demand nodes.
\end{definition}

The odd edges on paths starting at a supply node $\si$ are of particular interest: They are the only edges that could theoretically be deleted when inserting an edge incident to $\si$. Note again that the margins determine which odd edge is deleted. However, as we will see, by following Algorithms \ref{algo:hirschwalk} and \ref{algo:newsupply} the odd edges on paths starting at $\si$ that are already shaded will not be deleted. In fact, we will prove later that Algorithm \ref{algo:newsupply} returns a supply node with the (SIN) property. Proving these two statements is a key part of our upcoming proof.

Typically, we will only distinguish between odd and even edges on paths starting at a fixed supply node $\si$ with the (SIN) property. 
To have a simpler wording, we will refer to these edges only as \emph{odd} or \emph{even} if it is clear which supply  node is being considered.

\begin{example}[SIN]\label{ex: SIN} 
We illustrate the property (SIN) with the tree depicted in Figure \ref{fig: exsin}. Edges highlighted by wavy lines are the odd edges on paths starting at $\sigma$. 
These edges are either unshaded or they are shaded (bold) $-$edges with well-connected demand node ($\delta^1$ and $\delta^2$  in Figure \ref{fig: exsin} 
are not incident to an unshaded edge and thus well-connected). There are no conditions on the even edges on paths starting at $\sigma$.

\begin{figure}
\center
\begin{tikzpicture}[scale= 0.8]
\coordinate (v1) at (0,2);
\coordinate (v2) at (1,0);
\coordinate (v3) at (2,2);
\coordinate (v4) at (2,0);
\coordinate (v5) at (3,0);
\coordinate (v6) at (4.5,2);
\coordinate (v7) at (5,0);
\coordinate (v8) at (3,2);
\coordinate (v9) at (-1,0);
\coordinate (v10) at (-2,1.5);
\coordinate (v11) at (-2,0);
\coordinate (v12) at (-3,0);
\coordinate (v13) at (-1.8,2.3);
\coordinate (v14) at (-4,0);

\draw[gray, snake=coil, segment aspect=0] (v1)--(v2);
\draw[gray, snake=coil, segment aspect=0] (v2)--(v3);
\draw[gray, snake=coil, segment aspect=0] (v5)--(v6);
\draw[gray, snake=coil, segment aspect=0] (v5)--(v8);
\draw[gray, snake=coil, segment aspect=0] (v9)--(v10);
\draw[gray, snake=coil, segment aspect=0] (v9)--(v13);

\draw[line width = 1.5] (v1)--(v2) node[midway] {$ -\;\;$} ;
\draw (v5)--(v8);
\draw (v2)--(v3);
\draw (v3)--(v4);
\draw (v3)--(v5);
\draw[line width = 1.5] (v5)--(v6) node[midway] {$\;\; - $};
\draw[line width = 1.5 ] (v6)--(v7)node[midway] {$\;\;+$};
\draw[line width = 1.5] (v10)--(v11) node[midway] {$ -\;\;$};

\draw[line width = 1.5]  (v1)--(v9)  node[midway] {$ +\;\;$};
\draw (v9)--(v10);
\draw (v10)--(v11);
\draw (v10)--(v12);
\draw (v9)--(v13);
\draw (v13)--(v14);

\node[below] at (v2) {$ \si$};
\node[above] at (v1) {$\di^1$};
\node[above] at (v6) {$\di^2$};
\end{tikzpicture}
\caption{A node $\sigma$  with the (SIN) property.} \label{fig: exsin}
\end{figure}


Recall that the odd (wavy) edges are the ones that might be deleted when inserting an edge incident to $\sigma$. However, (UNO), (SIN), 
and our insertion strategy (first $-$edges, then $+$edge) imply that we always delete an unshaded edge. In particular, a shaded $-$edge will 
not be deleted if it is incident to a well-connected demand node.
\hfill\qed
\end{example}

\begin{example}[Example \ref{ex: steps of alg} revisited] \label{ex: compchanges}
We illustrate the well-connected nodes and components throughout the run of the algorithm in Example \ref{ex: steps of alg} in Figure \ref{ex: compchanges}.

	\begin{figure}
		\centering
			\begin{tikzpicture}[scale=0.6]
					\coordinate (c1) at (1,0);
					\coordinate (c2) at (3,0);
					\coordinate (x1) at	(0,2);
					\coordinate (x2) at (2,2);
					\coordinate (x3) at (4,2);
					\draw[line width = 0.05]  (c1) ellipse (0.5 and 0.5);
					\draw[line width = 0.05]  (c2) ellipse (0.5 and 0.5);
					\draw[line width = 0.05]  (x1) ellipse (0.5 and 0.5);
					\draw[line width = 0.05]  (x2) ellipse (0.5 and 0.5);
					\draw[line width = 0.05]  (x3) ellipse (0.5 and 0.5);
				  \draw (c1)--(x2) ;
				  \draw (c2)--(x2) ;
					\draw (c1)--(x1);
				  \draw (c2)--(x3) ;
					\node at (2,-1.5) {start ($O$)};
			\end{tikzpicture}
\hspace*{1cm}
			\begin{tikzpicture}[scale=0.6]
					\coordinate (c1) at (1,0);
					\coordinate (c2) at (3,0);
					\coordinate (x1) at	(0,2);
					\coordinate (x2) at (2,2);
					\coordinate (x3) at (4,2);
					\draw[line width = 0.05]  (c1) ellipse (0.5 and 0.5);
					\draw[line width = 0.05]  (c2) ellipse (0.5 and 0.5);
					\draw[line width = 0.05]  (x1) ellipse (0.5 and 0.5);
					\draw[line width = 0.05]  (x2) ellipse (0.5 and 0.5);
					\draw[line width = 0.05]  (x3) ellipse (0.5 and 0.5);
				  \draw (c2)--(x2) ;
					\draw (c1)--(x1);
				  \draw (c2)--(x3) ;
				  \draw[line width = 1.5] (c1)--(x3) node[near end]{$-\;\;$};			  
					\node at (2,-1.5) {after iteration 1};
			\end{tikzpicture}
\hspace*{1cm}
			\begin{tikzpicture}[scale=0.6]
					\coordinate (c1) at (1,0);
					\coordinate (c2) at (3,0);
					\coordinate (x1) at	(0,2);
					\coordinate (x2) at (2,2);
					\coordinate (x3) at (4,2);
					\draw[line width = 0.05]  (c1) ellipse (0.5 and 0.5);
					\draw[line width = 0.05]  (x1) ellipse (0.5 and 0.5);
					\draw[line width = 0.05]  (x3) ellipse (0.5 and 0.5);
					\draw[line width = 0.05, rotate= 30]  (2.6,-0.3) ellipse (0.7 and 1.7);
					\draw [fill, black] (x2) circle (0.15cm);
					\draw (c1)--(x1);
				  \draw (c2)--(x3) ;
				  \draw[line width = 1.5] (c1)--(x3) node[near end]{$-\;\;$};	
				   \draw[line width = 1.5] (c2)--(x2)node[near end]{$\;\;-$};			  
				\node at (2,-1.5) {after iteration 2};
			\end{tikzpicture}
\hspace*{1cm}

			\begin{tikzpicture}[scale=0.6]
					\coordinate (c1) at (1,0);
					\coordinate (c2) at (3,0);
					\coordinate (x1) at	(0,2);
					\coordinate (x2) at (2,2);
					\coordinate (x3) at (4,2);
					\draw[line width = 0.05, rotate= -57]  (0.5,2.3) ellipse (0.5 and 2.7);
					\draw[line width = 0.05, rotate= 55]  (2.2,-0.8) ellipse (1.1 and 2.4);
					\draw [fill, black] (c2) circle (0.15cm);
					\draw [fill, black] (x2) circle (0.15cm);
					\draw [fill, black] (x3) circle (0.15cm);
				  \draw[line width = 1.5] (c2)--(x2)node[near end]{$\;\;-$};
					\draw (c1)--(x1);
				  \draw[line width = 1.5] (c1)--(x3) node[near end]{$-\;\;$};	
				  \draw[ line width = 1.5] (c2)--(x1)node[near end]{$\;\;\, +$};	
					\node at (2,-1.5) {after iteration 3};
			\end{tikzpicture}
\hspace*{1cm}
			\begin{tikzpicture}[scale=0.6]
					\coordinate (c1) at (1,0);
					\coordinate (c2) at (3,0);
					\coordinate (x1) at	(0,2);
					\coordinate (x2) at (2,2);
					\coordinate (x3) at (4,2);
					\draw [fill, black] (c1) circle (0.15cm);
					\draw [fill, black] (c2) circle (0.15cm);
					\draw [fill, black] (x1) circle (0.15cm);
					\draw [fill, black] (x2) circle (0.15cm);
					\draw [fill, black] (x3) circle (0.15cm);
				  \draw[line width = 1.5] (c2)--(x2)node[near end]{$\;\;-$};
					\draw[line width = 1.5]  (c1)--(x2)node[near end]{$+\;\;$};	
				  \draw[line width = 1.5] (c1)--(x3) node[near end]{$-\;\;$};	
				  \draw[ line width = 1.5] (c2)--(x1)node[near end]{$\;\;\, +$};	
					\node at (2,-1.5) {after iteration 4 ($F$)};
					\draw[line width = 0.05]  (2,1.1) ellipse (3 and 1.7);
			\end{tikzpicture}
					\caption{Components (encircled) and well-connected nodes (filled) throughout a run of Algorithm \ref{algo:hirschwalk}. } \label{fig: ex algo components}
\end{figure}


Note that not all shadings affect the components (iteration 1). In the other iterations, smaller components are successively merged to larger ones as nodes and edges become well-connected. Well-connected components can be joined



\begin{itemize}
\item when a supply node becomes well-connected by (inserting and) shading the $+$edge incident to the node (see bottom right node in iteration 3) or
\item when the last unshaded edge incident to a demand node is shaded (top center in iteration 2) or deleted (top right in iteration 3).
\end{itemize} Then the open node of one of the components becomes the open node of the new, larger component. In particular, the trees satisfy (UNO) up to after iteration $3$.
\hfill\qed

\end{example}

We close this subsection by proving two lemmas, Lemma \ref{lem: done} and \ref{lem: not all +}, which will be useful later in the main steps of the proof.

Recall that our algorithm has to stop if $C$ is fully shaded. Then in particular $C=F$, as only edges contained in the final tree $F$ are shaded during the run of the algorithm. The first lemma connects this criterion to the (UNO) property. 
\begin{lemma}\label{lem: done}
Let $C$ be a partially shaded tree, corresponding to a vertex of a transportation polytope $\TP$.
If there is a well-connected component without an open node in $C$, then it is because $C$ is fully shaded.
\end{lemma}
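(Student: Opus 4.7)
The strategy is to show that $K=V$, the full node set. Once that is established, every $C$-edge has its demand endpoint in $K$, and the structural argument below forces every such edge to be shaded, so $C$ is fully shaded as desired. Let $k_1,k_2$ denote the numbers of supply and demand nodes in $K$.

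\emph{Structure in $K$.} For every supply $\sigma\in K$, well-connectedness forces every $F$-edge incident to $\sigma$ to be shaded, hence present in $C$; being shaded with a well-connected endpoint, such an edge is itself well-connected, so its demand endpoint also lies in $K$. Dually, every $C$-edge incident to a demand $\delta\in K$ is shaded (hence in $F$) and its supply endpoint lies in $K$. Combining these, $C[K]=F[K]$, and this common subgraph is a spanning tree of $K$: it contains the well-connected edges that make $K$ connected (by definition of a well-connected component) and it is a subforest of the tree $C$. The same observations yield a supply--demand balance. Set $U_K:=\sum_{\sigma\in K}u_\sigma$ and $V_K:=\sum_{\delta\in K}v_\delta$. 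In $C$, no edge leaves $K$ through a demand in $K$, so $V_K$ is fed entirely by internal edges and $V_K\le U_K$; symmetrically, in $F$ no edge leaves $K$ through a supply in $K$, so $U_K\le V_K$. Hence $U_K=V_K$.

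\emph{Ruling out $K\ne V$.} First, $k_1,k_2\ge 1$: any well-connected node has some shaded $C$-edge (by the tree property, every node has positive degree in the non-degenerate case) and that edge is well-connected, so the component cannot be a single isolated node. Second, if $K\ne V$ then $k_1<N_1$ and $k_2<N_2$: if $k_1=N_1$, the structural step makes every $F$-edge internal to $K$, and connectedness of $F$ forces $K=V$; the case $k_2=N_2$ is analogous using $C$. Consequently, the index sets $I$ and $J$ corresponding to $K\cap\{\sigma^1,\ldots,\sigma^{N_1}\}$ and $K\cap\{\delta^1,\ldots,\delta^{N_2}\}$ are non-empty proper subsets satisfying $\sum_{i\in I}u_i=\sum_{j\in J}v_j$, contradicting the standing non-degeneracy assumption recalled in Section~\ref{sec: TP prelim}. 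Therefore $K=V$, every $C$-edge is internal to $K$, and every internal $C$-edge is shaded, so $C$ is fully shaded.

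The main obstacle is the structural step: one must simultaneously exploit well-connectedness of every supply node in $K$ (via $F$-edges being forced shaded into $C$) and of every demand node in $K$ (via $C$-edges being forced shaded into $F$), together with the fact that a shaded edge with a well-connected endpoint cannot leave its well-connected component, in order to obtain the clean equality $C[K]=F[K]$ and the spanning-tree structure. Once this picture is in place, the supply--demand balance and the appeal to non-degeneracy are routine.
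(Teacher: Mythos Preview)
Your proof is correct and takes a genuinely different route from the paper's. The paper argues by flow decomposition: assuming $K\neq V$, it locates an edge $e\in F\setminus C$ crossing from $K$ to $V\setminus V(K)$ (necessarily at a demand node of $K$), decomposes the circulation $y^F-y^C$ into cycles, picks a cycle through $e$, and derives a contradiction from the direction of flow change on the second crossing edge $e'$ via a bipartite parity argument. Your argument bypasses all of this: from the structural step $C[K]=F[K]$ you read off the balance $U_K=V_K$ directly (demands in $K$ are fed only internally in $C$, supplies in $K$ feed only internally in $F$) and then invoke the standing non-degeneracy hypothesis, which forbids a non-trivial partial margin equality. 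This is shorter and more elementary; it also makes transparent \emph{where} non-degeneracy enters, whereas in the paper's version non-degeneracy is used only implicitly (to ensure spanning trees rather than forests). The small price you pay is that your argument would not survive dropping non-degeneracy, while the paper's cycle argument is in spirit closer to a statement about arbitrary vertices; but since non-degeneracy is assumed throughout the paper anyway, this is no loss here.

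One cosmetic point: your justification ``by the tree property, every node has positive degree in the non-degenerate case'' for $k_1,k_2\ge 1$ is slightly misphrased. What you actually use is that $C$ and $F$ are spanning trees, so every node has degree at least one in each; then a well-connected supply node has a shaded $F$-edge in $C$ and a well-connected demand node has a shaded $C$-edge, and in either case that edge is well-connected, forcing the component to contain both a supply and a demand node. This is exactly your argument, just stated more carefully.
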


\begin{proof}

Let $K$ be a component in the current tree $C$ with all nodes well-connected and assume $C$ is not fully shaded already. 
Let $V$ be the set of nodes  of the underlying bipartite graph. Then in particular, $V(K)$ is not connected to $V-V(K)$ by a shaded edge in $C$, but only by an unshaded edge. Similarly, $V(K)$ and $V-V(K)$ are connected in $F$ by at least one edge $e$. This edge $e$ cannot exist in $C$ already: Assume it does. Then $e$ has to be unshaded. Otherwise it would be a shaded edge incident to a well-connected node in $K$ and thus would be a well-connected edge which makes component $K$ larger. There are no unshaded edges incident to well-connected demand nodes, so $e$ has to be incident to a supply node in $V(K)$. This contradicts the definition of a well-connected supply node. Thus $e$ still has to be inserted and it must be incident to a demand node of the component $K$.

As we explain in Section \ref{sec: TP prelim} it is standard practice to think of a transportation problem as a min-cost flow problem on a bipartite network and we will make use of this now. The feasible flows for given margins $u,v$ correspond to the maximum flows in this network. Thus the difference of two feasible flows is a circulation, which is well-known to decompose into a set of cycles with flow through them \cite{ff-62}. 
Therefore, the difference $y^F-y^C$ of the feasible flows $y^C,y^F$ of two trees $C$ and $F$ can be decomposed into such a set of cycles. Even more, there is such a decomposition such that flow on edges with  $\{\si^i,\di^j\}$ with $y^F_{ij}>y^C_{ij}$ is only increased and flow on edges  with $y^F_{ij}<y^C_{ij}$ is only decreased. 
In particular there is a cycle $\mathcal C$ that increases flow on our edge $e$ that exists in $F$ but not in $C$. Then the cycle $\mathcal{C}$ connects $V(K)$ and $V-V(K)$ by at least one other edge $e'\neq e$. The edge $e'$ is unshaded or does not even exist in the current tree $C$; otherwise it would be well-connected. 

Note that any pair of a supply node and a demand node is connected by a path with an odd number of edges. Every pair of two demand nodes is connected by a path with an even number of edges. Further, flow on the edges in the cycle $\mathcal{C}$ is alternatingly increased and decreased. Thus, flow on the edge $e'$ has to be increased, if it is incident to a supply node, or decreased, if it is incident to a demand node of $K$. But there are no such edges because for the supply nodes of $K$, all edges connecting to $V-V(K)$ are edges to delete (with flow to decrease) and for the demand nodes all such edges are edges to insert (flow to increase), a contradiction. Thus $C$ is fully shaded.  \hfill \qed
\end{proof}

During a run of our algorithm, we will keep the (UNO) property for all trees before the final step. Because of this, the above statement will result in a termination criterion. The next lemma is a simple but useful observation.

\begin{lemma}\label{lem: not all +}
Let $C$ be a partially shaded tree, corresponding to a vertex of a transportation polytope $\TP$.
Assume that in  $C$, there is a supply node satisfying (SIN). Then there is no demand node only 
incident to shaded $+$edges in $C$. 
In particular, every well-connected demand node is incident to a shaded $-$edge. 
\end{lemma}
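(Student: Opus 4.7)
The plan is to derive the first statement by contradiction, exploiting the bipartite structure of $K_{N_1,N_2}$. Suppose a supply node $\sigma$ satisfies (SIN) but there is a demand node $\delta$ in $C$ that is incident only to shaded $+$-edges. First I would consider the unique path from $\sigma$ to $\delta$ in the tree $C$. Since the graph is bipartite with supplies on one side and demands on the other, this path alternates between supply and demand nodes, starting at the supply node $\sigma$ and ending at the demand node $\delta$; consequently its length is odd.

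Next I would invoke the numbering that defines the odd/even distinction with respect to $\sigma$. The edges along the path from $\sigma$ are labeled $1, 2, 3, \dots$ in the order they appear, and since the endpoints alternate, every edge ending at a demand node has an odd position. In particular, the final edge of the $\sigma$-to-$\delta$ path, which is incident to $\delta$, is an odd edge with respect to $\sigma$. By (SIN), this edge must be either unshaded, or a shaded $-$-edge incident to a well-connected demand node. But by the assumption on $\delta$, every edge incident to $\delta$ in $C$ is a shaded $+$-edge; such an edge is shaded (ruling out the first option) and carries a $+$-label (ruling out the second). This contradiction proves the first assertion.

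For the second statement I would argue that a well-connected demand node $\delta$, by definition, has no unshaded edges incident to it in $C$; hence every incident edge is shaded and therefore carries a $+$ or $-$ label. Applying the first part just established, not all of these can be $+$-edges, so at least one must be a shaded $-$-edge, as claimed.

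The only point that requires care is the parity claim that the edge touching $\delta$ on the path from $\sigma$ lies in an odd position; I do not anticipate any real obstacle here, as it follows directly from the bipartition of $K_{N_1,N_2}$. Everything else is a clean application of the (SIN) definition.
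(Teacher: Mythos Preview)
Your proof is correct and follows the same approach as the paper: the paper's argument is simply the one-line observation that for any supply node, one of the shaded $+$edges incident to $\delta$ is odd, which contradicts (SIN). Your version spells out the parity argument via the bipartite structure explicitly and also writes out the deduction of the ``in particular'' clause, but the underlying idea is identical.
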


\begin{proof}
Assume there is a demand node $\di$ only incident to shaded $+$edges. Then, for any supply node, one of these shaded $+$edges is an odd edge. 
But this is a contradiction to having a supply node with the (SIN) property in $C$.  \hfill \qed
\end{proof}

\subsection{Proof of Theorem \ref{main thm}}

Before starting with the actual proof, let us briefly stress its main points. The key point is to show 
that we always avoid deletion of a shaded edge. We do so in the following way (which precisely 
corresponds to Algorithms \ref{algo:hirschwalk} and \ref{algo:newsupply}):

\begin{itemize}
\item If the current tree satisfies (UNO), then deletion of a shaded edge can be avoided if we (insert and) shade an edge incident to a supply node satisfying (SIN); first all $-$edges are shaded and only then the unique $+$edge. (this is the content of Lemma \ref{lem: delete unshaded}).
\item When proceeding like this, we keep the (UNO) property throughout the whole walk from $O$ to $F$. Further we can always find a new supply node with the (SIN) property for the next iteration. (this is the content of Lemma \ref{lem: insert edge}).
\end{itemize}

In the end, we combine our results to obtain Theorem \ref{main thm}.

We must also stress again that the margins of a (non-degenerate) transportation polytope determine which trees appear as vertices of a transportation polytope 
and which unique edge is deleted when inserting an edge into a tree.  Therefore, we can avoid dealing with explicit margins in our proofs. Instead our arguments 
always refer to the unique edge that is deleted, which allows for a less technical proof. Note that the edge is unique because of our assumption on the polytope being non-degenerate.




In Lemma \ref{lem: delete unshaded} we prove the key aspect for the correctness of our algorithm: The properties (UNO) and (SIN) and our shading order 
(first $-$edges, then the unique $+$edge) imply that no shaded edge is ever deleted. 




\begin{lemma}\label{lem: delete unshaded}
Let $C$ be a partially shaded tree, corresponding to a vertex of a transportation polytope $\TP$, and
assume the following three conditions hold for $C$:
\begin{enumerate}
	\item (UNO) holds in $C$. 
	\item There are no shaded $+$edges incident to open supply nodes.
	\item There is a supply node $\sigma$ satisfying (SIN) in $C$. 
\end{enumerate}

\noindent Choose an  edge $\se$ incident to $\sigma$, as follows:
\begin{itemize}

\item[ ] If  in the current tree $C$ there is a $-$edge incident to $\sigma$ still left to be (inserted and) shaded, then take edge $e$ to be one such edge. 
\\Otherwise, $e$ is picked to be the unique $+$edge incident to $\sigma$.
\end{itemize}
\noindent Under this rule of selection, no shaded edge is deleted when (inserting and) shading $\se$ in $C$ to produce $\suc$.
\end{lemma}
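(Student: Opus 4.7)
The plan is to argue by contradiction. The case $\se\in C$ is immediate, since (inserting and) shading an already-present edge only marks it and deletes nothing. Assume therefore $\se\notin C$, so that inserting $\se$ produces a unique cycle $Z$ together with the path in $C$ from $\sigma$ to $\delta$. Write this path as $\sigma = v_0, v_1, \ldots, v_k = \delta$ and set $e_i=\{v_{i-1},v_i\}$; note $k$ is odd by bipartiteness, so $Z$ has at least $4$ edges. A standard flow-alternation along $Z$ shows that only the odd-position edges $e_1,e_3,\ldots,e_k$ decrease in flow when $\se$ is inserted, so by non-degeneracy the unique deleted edge $e_{\text{del}}$ is the odd-position edge of minimum flow.

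Suppose, for contradiction, that $e_{\text{del}}=e_{2j+1}$ is shaded. By (SIN) applied to $\sigma$, $e_{\text{del}}$ must be a shaded $-$edge whose demand endpoint $v_{2j+1}$ is well-connected; in particular $v_{2j+1}\neq \delta^*$. The key step is to propagate well-connectedness along $Z$: because $v_{2j+1}$ is well-connected, every $C$-edge at $v_{2j+1}$ is shaded. If $v_{2j+1}\neq \delta$, the even-position edge $e_{2j+2}$ is then shaded at $v_{2j+1}$, and since the unique $-$edge in $F$ at $v_{2j+1}$ is $e_{\text{del}}$, the edge $e_{2j+2}$ must be a shaded $+$edge; this forces $v_{2j+2}$ to be well-connected. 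Iterating forward from $v_{2j+1}$, the chain of well-connectedness either reaches $v_k=\delta$ or breaks at a first unshaded odd-position edge. Applying this propagation to \emph{every} shaded odd edge of $Z$, the scenario in which all odd-position edges are shaded $-$edges at well-connected demand nodes would force every $e_i$ to be shaded (hence in $F$); combined with $\se\in F$ this would produce a cycle in $F$, contradicting that $F$ is a tree. Therefore at least one odd-position edge of $Z$ must be unshaded.

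The crux is to upgrade this observation to the sharper statement that the specific minimum-flow edge $e_{\text{del}}$ is unshaded. Here conditions (UNO) and (2) enter crucially: (SIN) forbids a shaded $+$edge at $\sigma$, so $\sigma$ is itself open and, by (UNO), is the unique open node of its well-connected component. The propagation above places each well-connected demand node on $Z$, together with the supply nodes it forces into well-connectedness, into a common component whose unique open node is pinned down by condition (2). Using flow conservation at those well-connected demand nodes, together with Lemma \ref{lem: not all +} (each is incident to exactly one shaded $-$edge), one compares the flow on any candidate shaded $-$edge $e_{2i+1}$ to the flow on the unshaded odd edge produced at the break of the chain, and concludes that the former strictly exceeds the latter. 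This contradicts $e_{\text{del}}$ being both shaded and of minimum flow. The main hurdle I expect is precisely this final flow-conservation comparison, which is the point where the margins of the non-degenerate polytope enter the argument and must be handled by a careful case analysis of whether $e$ is selected as the $+$edge or a $-$edge at $\sigma$.
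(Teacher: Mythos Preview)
Your argument has a genuine gap at exactly the point you flag as the ``main hurdle.'' You correctly reduce to the case where the deleted edge $e_{\mathrm{del}}$ is a shaded $-$edge whose demand endpoint is well-connected (this is forced by (SIN)), and your propagation observation that not all odd edges on $Z$ can be shaded (else $F$ would contain a cycle) is valid. But the leap from ``some odd edge on $Z$ is unshaded'' to ``the \emph{minimum-flow} odd edge is unshaded'' is not justified. Your sketch invokes flow conservation at the well-connected demand nodes to compare the flow on a shaded candidate $e_{2i+1}$ with that on an unshaded odd edge, yet flow conservation at a demand node $\delta$ only gives $\sum_{\sigma'} y_{\sigma'\delta}=v_\delta$; it does not bound the flow on any single incident edge against an edge elsewhere on the cycle. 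The margins can be arranged so that individual edge flows along $Z$ are in no monotone relation to shaded/unshaded status, and I see no local conservation identity that forces the inequality you need. In short, the proof is incomplete precisely where the numerics of the polytope enter, and this is not a minor detail to be filled in by ``careful case analysis.''

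The paper's proof avoids flow values entirely, and this is the idea you are missing. Instead of asking which odd edge has least flow, it asks: what happens to the well-connected component $K$ containing the (hypothetically) deleted shaded edge $\{\sigma',\delta'\}$? Removing that edge splits $K$ into $K_{\sigma'}$ and $K_{\delta'}$; a short counting argument (each well-connected supply node has a shaded $+$edge, each well-connected demand node a shaded $-$edge by Lemma~\ref{lem: not all +}) shows the unique open node of $K$ lies in $K_{\sigma'}$, so every node of $K_{\delta'}$ is well-connected. Now either the inserted edge $\{\sigma,\delta\}$ does not touch $K_{\delta'}$, in which case $K_{\delta'}$ is a component of $C'$ with no open node and Lemma~\ref{lem: done} forces $C'=F$ (contradiction), or it does touch $K_{\delta'}$, which forces $\delta\in K_{\delta'}$, hence $\{\sigma,\delta\}$ is a $+$edge, hence $\sigma$ becomes well-connected, and again one obtains a component with no open node, invoking Lemma~\ref{lem: done} for the same contradiction. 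The point is that the contradiction comes from the \emph{structure} of well-connected components, not from comparing flow magnitudes; this is what lets the argument work uniformly over all margins.
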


\begin{proof}
If we only shade an existing edge, the statement is obvious because in that operation we delete no edges.
Let  $\se=\{\si,\di\}$ be the edge we insert into the tree $C$. Recall that the margins of $\TP$ 
 determine the edge $\{\si',\di'\}$ that is deleted in this step. Let $\suc$ be the succeeding tree. 
Now assume $\{\si',\di'\}$ was shaded in $C$. Note that by (SIN) for $\si$ in $C$, $\{\si',\di'\}$ was a $-$edge with $\di'$ well-connected in $C$.  
Let $K$ be the component in $C$ that contains both $\si'$ and $\di'$. Then $E(K)-\{\si',\di'\}$ induces two connected components, the component 
$K_{\di'}$ containing $\di'$ and $K_{\si'}$ containing $\si'$.

First observe that the open node of component $K$ (in $C$) must be contained in $K_{\si'}$. To see this, assume all 
nodes in $K_{\si'}$ are well-connected. Then each supply node must be incident to a shaded $+$edge and by 
Lemma \ref{lem: not all +} each demand node is incident to a shaded $-$edge. Further, all these edges are contained 
in $K_{\si'}$. But then $K_{\si'}$ has at least $|V(K_{\si'})|$ many edges, a contradiction to being cycle-free as a subgraph of a tree.

Thus, all nodes in the other connected component $K_{\di'}$ are well-connected in $C$ and therefore also in $\suc$. This is because no unshaded edge is inserted (in particular not incident to a demand node in $K_{\di'}$) and in this step no edge incident to a supply node in $K_{\di'}$  is deleted.
If $\{\si,\di\}$ does not connect to $K_{\di'}$, then $K_{\di'}$ forms a component in $\suc$ and all nodes in this component are well-connected. Lemma \ref{lem: done} implies $\suc=F$, a contradiction to $\{\si',\di'\}\in F\backslash \suc$. 


\begin{figure}[H]
\center
\begin{tikzpicture}[scale= 0.6]

\coordinate (v4) at (-1,0)  ;
\coordinate (v1) at (0,2)  ;
\coordinate (v2) at (0,0)  ;
\coordinate (v3) at (1,0)  ;
\coordinate (v9) at (3,2)  ;
\coordinate (v8) at (2.7,0)  ;
\coordinate (v10) at (4,0)  ;
\coordinate (v11) at (5,2)  ;
\coordinate (v12) at (5,-1.5)  ;
\coordinate (v15) at (5.5,-3)  ;
\coordinate (v14) at (4.5,-3)  ;
\coordinate (v13) at (6,0)  ;
\coordinate (v5) at (-2,2)  ;
\coordinate (v6) at (-3,0)  ;

\draw[line width= 1.5]  (v1) -- (v2)node[midway] {$ \;\;+$};
\draw[line width= 1.5]  (v1) -- (v3)node[midway] {$\;\; +$};
\draw[line width= 1.5]  (v1) -- (v4)node[midway] {$ -\;\;\;$};
\draw [line width= 1.5]  (v4) -- (v5) node[midway] {$ +\;\;\;$} ;
\draw[line width= 1.5]  (v6) -- (v5)node[midway] {$-\;\;\; $} ;
\draw[line width= 1.5]  (v8) -- (v9) node[midway] {$ \;\;+$};
\draw[dotted]  (v9) -- (v3) node[midway] {$-$};
\draw[line width= 1.5]  (v9) -- (v10) node[midway] {$ \;\;+$};
\draw[line width= 1.5]  (v11) -- (v10) node[midway] {$- \;\;$};
\draw[line width= 1.5, dashed]  (v11) -- (v12) node[midway] {$ \;\;+$};
\draw[line width= 1.5]  (v11) -- (v13)node[midway] {$ \;\;\; +$};

\node[above] at (v11) {$\di$};
\node[above] at (v9) {$\di'$};
\node[below] at (v3) { $\si'$};
\node[right] at (v12) {$\si$};
\draw[line width = 1.5] (v12) -- (v15) node[midway] {$ \;\;-$};
\draw[line width = 1.5] (v12) -- (v14) node[midway] {$ -\;\;$};

\draw[line width = 0.05]  (4.4, 1.3) ellipse (2.4 and 2);
\draw[line width = 0.05]  (-1, 0.8) ellipse (2.4 and 2);
\draw[line width = 0.05]  (5, -2.3) ellipse (1.2 and 1.2);
\node at (-1,2.8) {$K_{\si'}$};
\node at (6.8,1.7) {$K_{\di'}$};
\end{tikzpicture}\caption{The dotted edge $\{\si',\di'\}$ was deleted and the bold dashed edge $\{\si,\di\}$ was inserted; ovals illustrate $K_{\si'}$, $K_{\di'}$, and the component with open node $\si$ in $C$. In this example, the unnamed left-most supply node is the open node in $K_{\si'}$. }\label{fig:sin}
\end{figure}

Therefore, $\{\si,\di\}$ has to connect to $K_{\di'}$, so one of $\si$ or $\di$ is contained in $K_{\di'}$. All supply nodes in $K_{\di'}$ were already well-connected in $C$, but $\si$ was not. Thus $\di$ must be the node that is contained in $K_{\di'}$. Figure \ref{fig:sin} depicts the situation. Observe that $\di$ must be incident to a $-$edge in $C$ by Lemma \ref{lem: not all +}. Further, recall that there is at most one $-$edge incident to each demand node. Therefore, $\{\si,\di\}$ is a $+$edge. In particular, this is the unique $+$edge incident to $\si$ and thus $\si$ is well-connected in $\suc$ (note that the shaded $-$edge that we deleted was not incident to $\sigma$ as then $F$ would contain a cycle). Therefore, $\{\si,\di\}$ is well-connected in $\suc$ and its insertion merged the components of $\si$ and $\di$ to a larger  component in $\suc$. But this component does not have an open node: We already saw that all nodes in $\delta$'s component $K_{\delta'}$ are well-connected in $\suc$. Further $\si$ was the unique open node of its component, but $\si$ is well-connected now. 
Again Lemma \ref{lem: done} implies $\suc=F$, but we have $\{\si',\di'\}\in F \backslash \suc$.

Therefore, when inserting $\{\si,\di\}$, we do not delete a shaded edge incident to a well-connected demand node. This proves the claim.  \hfill \qed

\end{proof}


Note that, in particular, (UNO) and (SIN) appear as prerequisites for the application of Lemma \ref{lem: delete unshaded}. They appear as conditions 1 and 3.

Lemma \ref{lem: insert edge} is the final statement we need to see the correctness of Algorithm \ref{algo:hirschwalk}. 
We prove that conditions 1 to 3 in Lemma \ref{lem: delete unshaded} remain valid when following Algorithm \ref{algo:hirschwalk}. 
In particular, (UNO) holds in each iteration and Algorithm \ref{algo:newsupply} returns a node satisfying (SIN) in the succeeding tree. 
Further, we prove that the termination criterion is correct. 
The observations from Example \ref{ex: compchanges} are particularly helpful in the proof.


\begin{lemma}\label{lem: insert edge}
Let $C$ be a partially shaded tree, corresponding to a vertex of a transportation polytope $\TP$, and
assume the following three conditions hold for $C$:
\begin{enumerate}
	\item (UNO) holds in $C$. 
	\item There are no shaded $+$edges incident to open supply nodes.
	\item There is a supply node $\sigma$ satisfying (SIN) in $C$. 
\end{enumerate}

\noindent Choose an  edge $\se$ incident to $\sigma$, as follows:
\begin{itemize}

\item[ ] If  in the current tree $C$ there is a $-$edge incident to $\sigma$ still left to be (inserted and) shaded, then take edge $e$ to be one such edge. 
\\Otherwise, $e$ is picked to be the unique $+$edge incident to $\sigma$.
\end{itemize}

\noindent Then, under this rule, one of the following holds:
 \begin{itemize}
 	\item[(i)] Let $\delta^*$ be the demand node only incident to $+$edges in $F$. If $\delta^*$ is well-connected in the succeeding tree $\suc$, then  $\suc$ is fully shaded, that is, it equals the final tree $F$ and all edges are shaded. 
 	\item[(ii)] Otherwise, there is still an open supply node with the (SIN) property in the succeeding tree $\suc$ and we can find such a node by proceeding as in Algorithm \ref{algo:newsupply}. In particular, $\suc$ is not fully shaded. Further, (UNO) holds in the tree $\suc$. 
 \end{itemize}

\end{lemma}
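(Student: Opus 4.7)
The plan is to analyze the transition from $C$ to $\suc$ by a case distinction on the selected edge $\se = \{\sigma, \delta\}$: whether $\se$ is a $-$edge at $\sigma$ in $F$ or the unique $+$edge of $\sigma$, and whether $\se$ is newly inserted (in which case a unique unshaded edge $\{\sigma',\delta'\}$ is deleted, by Lemma~\ref{lem: delete unshaded}) or only shaded. In every subcase, Lemma~\ref{lem: delete unshaded} guarantees that the shaded-edge set grows monotonically by exactly $\{\se\}$, so the only changes in well-connectedness come from (a) a possibly newly shaded $+$edge at $\sigma$, (b) the demand node $\delta$ possibly losing its last unshaded edge, (c) the demand node $\delta'$ losing an unshaded edge through the deletion, and (d) the new edge $\se$ bridging previously separate components.

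In each subcase I would verify the three items needed to continue the algorithm. First, condition 2 of Lemma~\ref{lem: delete unshaded} persists in $\suc$, since $\sigma$ is the only supply node whose shaded-edge set grows, and if its $+$edge is what becomes shaded then $\sigma$ has just become well-connected. Second, the open-node count in each affected component is tracked through the merger: if the insertion connects the component of $\sigma$ with the one reached through $\delta$, I would identify explicitly the unique open node of the merged component as either $\delta'$ (when $\delta'$ still carries an unshaded edge or sits on the open side of its former component) or as absent, the latter corresponding to case (i). Third, (UNO) is thereby preserved in $\suc$. The bookkeeping shows that the node $\delta'$ passed to Algorithm~\ref{algo:newsupply} is precisely the unique open node of its component in $\suc$ whenever one exists.

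With (UNO) secured for $\suc$, I would verify that Algorithm~\ref{algo:newsupply} returns a supply node $\sigma^{\mathrm{new}}$ satisfying (SIN) in $\suc$. If $\delta'$ is still open in $\suc$, $\sigma^{\mathrm{new}}$ is the supply endpoint of an unshaded edge at $\delta'$; the odd edges at $\sigma^{\mathrm{new}}$ trace through the well-connected part of the component towards the open node $\delta'$, and, using Lemma~\ref{lem: not all +}, I can check they are all unshaded or are shaded $-$edges incident to well-connected demand nodes. If $\delta'$ is already well-connected (but its component still has an open node elsewhere), the same check is carried out for $\sigma^{\mathrm{new}}=\sigma''$, the supply endpoint of the unique shaded $-$edge at $\delta'$ granted by Lemma~\ref{lem: not all +}. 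For part (i), when $\delta^*$ becomes well-connected in $\suc$, every edge at $\delta^*$ in $\suc$ is shaded; by the labeling rule all $F$-edges at $\delta^*$ are $+$edges, so each such edge forces its supply endpoint to carry a shaded $+$edge, which by condition 2 makes that supply node well-connected, and hence all of its $F$-edges lie in $\suc$ and are shaded. Propagating this along $F$ shows that the component $K$ of $\delta^*$ in $\suc$ contains no open node, and then Lemma~\ref{lem: done} forces $\suc = F$ fully shaded.

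The main obstacle I expect to manage is the subcase where $\se$ is the $+$edge at $\sigma$ and is inserted: there $\sigma$ simultaneously becomes well-connected, an unshaded odd edge is deleted (splitting the component containing $\sigma$), and $\se$ merges components across the tree. Showing that the resulting single component still has exactly one open node, identified as the $\delta'$ passed to Algorithm~\ref{algo:newsupply}, requires combining (SIN) for $\sigma$ in $C$, the uniqueness of the deleted edge granted by non-degeneracy, and the (UNO) structure of $C$. Once this subcase is settled, the propagation step needed for part (i) follows cleanly from the labeling property together with the invariants (UNO) and condition 2 just verified in $\suc$.
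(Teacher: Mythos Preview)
Your overall plan---a case analysis on whether $e$ is only shaded or actually inserted, tracking which nodes become well-connected, verifying (UNO) and condition~2 in $\suc$, and locating the new (SIN) node via Algorithm~\ref{algo:newsupply}---is essentially the paper's approach.

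The genuine gap is your direct argument for part~(i). You claim that once $\delta^*$ is well-connected, condition~2 forces the adjacent supply nodes to be well-connected, and then ``propagating along $F$'' the component $K$ of $\delta^*$ has no open node. But the propagation stalls one layer out: from a well-connected supply node $\sigma_i$ you reach a demand node $\delta_j$ via a shaded $-$edge; this places $\delta_j$ in $K$, yet does \emph{not} make $\delta_j$ well-connected---$\delta_j$ may still carry an unshaded edge in $\suc$, in which case it is an open node of $K$. Nor can you continue to the next supply layer, since the $+$edge from $\delta_j$ going away from $\delta^*$ in $F$ need not even lie in $\suc$. The paper sidesteps this by deriving (i) as the contrapositive of (ii): the case analysis shows that whenever $\suc$ is not fully shaded, some open supply node in $\suc$ has (SIN); Lemma~\ref{lem: not all +} then forbids any demand node from being incident only to shaded $+$edges, and since every $F$-edge at $\delta^*$ is a $+$edge, $\delta^*$ cannot be well-connected. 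A smaller imprecision is your (SIN) check for the node returned by Algorithm~\ref{algo:newsupply}: the paper's argument rests on the observation that the odd/even parity of each edge of $\suc$ with respect to the new node $\sigma'$ (or $\sigma''$) coincides with its parity with respect to $\sigma$ in $C$, so (SIN) is inherited from $\sigma$ once the one or two edges at $\delta'$ are handled directly; your phrasing does not isolate this mechanism.
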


\begin{proof}
Our proof goes as follows: We will show that if $\suc$ is not fully shaded, then there is a supply node satisfying (SIN) in $\suc$ (see cases 1 and 2). But if there is a supply node with the (SIN) property, then $\delta^*$ cannot be well-connected in $C$ by Lemma \ref{lem: not all +}. Thus, if $\delta^*$ is well-connected, then $\suc$ is fully shaded. This proves (i). 

It remains to prove (ii). Clearly, if $\delta^*$ (or any other node) is not well-connected, then $\suc$ cannot be fully shaded. We show the remaining statements from (ii) in cases 1 and 2. Observe that the selection of a new supply node with the (SIN) property in $\suc$ in cases 1 and 2 matches precisely the routine described in Algorithm \ref{algo:newsupply}. 

We first summarize some fundamental observations.
Let $\se=\{\si,\di\}$ be the edge we (insert and) shade in the tree $C$, yielding the succeeding tree $\suc$. 
Recall that if we perform an actual insertion, then the margins determine which edge is deleted, and thus they determine the succeeding tree $\suc$.

All nodes/edges that were well-connected in $C$ are still well-connected in $\suc$, as by 
Lemma \ref{lem: delete unshaded} no shaded edge is deleted.
At most two nodes might become well-connected in this step: The supply node $\si$ (which is 
the case if and only if $\{\si,\di\}$ is a $+$edge) and the demand node $\di$ if we only shade an 
edge, respectively, the node $\di'$ incident to the edge deleted in case we perform an insertion.

All shaded edges incident to $\si$ in $C$ are already well-connected by (SIN) for $\si$ in $C$.
In particular, they all belong to the component with open node $\si$.
Therefore, the only edges that might become well-connected are $\{\si,\di\}$ and the shaded edges 
incident to the demand node $\di$ (if only shading), respectively, $\di'$ (if insertion/deletion). 
If we get new well-connected edges, we join several smaller components to a larger component.
\\\\\\
\emph{Case 1} The edge $\{\si,\di\}$ already exists in $C$, so we only shade it. 
\\\\
In $C$, $\si$ and $\di$ are the unique open nodes of their components, because there is the edge $\{\si,\di\}$ left to shade at $\si$ and $\di$ is incident to an unshaded edge $\{\si,\di\}$.  
Every supply node $\si'$ connected to $\di$ by an unshaded edge or a $-$edge in $\suc$ is open in $\suc$, as $\si'$ cannot be incident to a shaded $+$edge by the (SIN) property for $\si$ in $C$ (see also Figure \ref{fig:lem4}). 

First assume $\di$ is open in $\suc$. This case is depicted in the left-most picture in Figure \ref{fig:lem4}. Then $\di$ is still incident to an unshaded edge $\{\si',\di\}$ and $\si'$ is open. Further, $\si'$ satisfies (SIN) in $\suc$. 
To see this, note that for all edges, except for $\{\si,\di\}$ and $\{\si',\di\}$, this follows from (SIN) for $\si$ in $C$ (these edges have the same 'parity' with respect to both nodes $\si$ and $\si'$). $\{\si',\di\}$ is the only remaining odd edge for $\si'$. But this edge is unshaded. \\
For (UNO), observe that the shading affects the components only if $\{\si,\di\}$ is a $+$edge, in which case $\si$ becomes well-connected. But then we joined the two components with open node $\si$ and $\di$, respectively, to a larger component with unique open node $\di$. Thus (UNO) holds in $\suc$. 
\\\\
Otherwise, shading the edge $\{\si,\di\}$ makes $\di$ well-connected. If $\di$ is only incident to shaded $+$edges in $\suc$, then $\suc$ is fully shaded. The reason is that $\di$ is well-connected, and thus the edge $\{\si,\di\}$ also is well-connected. Therefore, we joined the two components with open nodes $\si$ and $\di$ to a single component in $\suc$. But this larger component does not have an open node, as both $\si$ and $\di$ are well-connected in $\suc$ (recall that $\si$ is incident to the shaded  $+$edge $\{\si,\di\}$). Therefore, we reached $F$ with all edges shaded by Lemma \ref{lem: done}.

Thus, if $\suc$ is not fully shaded, then $\di$ is incident to its unique $-$edge $\{\si'',\di\}$ in $\suc$ and it is shaded (note that $\si''=\si$ is possible); see center and right-hand side of Figure \ref{fig:lem4}. Recall that $\si''$ is open in $\suc$ and observe that it satisfies (SIN) in $\suc$. This is because the odd edge $\{\si'',\di\}$ is a $-$edge incident to a well-connected demand node. For the remaining odd edges the property follows from (SIN) for $\si$ in $C$. 

For (UNO), observe that  $\{\si,\di\}$ and $\{\si'',\di\}$ became well-connected. $\si$, $\si''$, and $\di$ were  open in $C$, but $\di$ is well-connected in $\suc$. Therefore, we merged the components with open nodes $\si$, $\si''$, and $\di$, respectively, to a larger component. 
Further, $\si''$ is the unique open node of this component. For $\si=\si''$, this is obvious. Otherwise $\si$ is well-connected in $\suc$, as $\{\si,\di\}$ has to be a $+$edge. This is because there is at most one $-$edge incident to every demand node, which is $\{\si'',\di\}$ in this situation. Therefore, (UNO) holds in $\suc$. This concludes case $1$.

\begin{figure}
\center
\begin{tikzpicture}[scale= 0.6]

\coordinate (v4) at (-1,0)  ;
\coordinate (v1) at (0.4,2)  ;
\coordinate (v2) at (2,0)  ;
\coordinate (v3) at (0.5,0)  ;
\coordinate (v5) at (-1.75,-2)  ;
\coordinate (v6) at (-0.25,-2)  ;

\draw[line width= 1.5]   (v4) -- (v5)node[midway] {$ -\;\;\;$};
\draw[line width= 1.5]   (v4) -- (v6)node[midway] {$\;\;\; -$};
\draw[line width= 1.5]   (v1) -- (v3)node[midway] {$\;\;\; +$} ;
\draw[line width= 1.5, dashed]  (v1) -- (v4);
\draw  (v1) -- (v4);
\draw (v1) -- (v2);
\node[left] at (v4) {$\si$};
\node[above] at (v1) {$\di$};
\node[right] at (v2) {$\si'$};

\draw[line width = 0.05]   (-1.3,-1.4) ellipse (1.4 and 1.7);
\draw[line width = 0.05]   (0.45,1.2) ellipse (0.6 and 1.8);
\end{tikzpicture}
\hspace*{1cm}
\begin{tikzpicture}[scale= 0.6]
\coordinate (v4) at (-1,0)  ;
\coordinate (v1) at (0.4,2)  ;
\coordinate (v2) at (2,0)  ;
\coordinate (v3) at (0.5,0)  ;
\coordinate (v5) at (-1.75,-2)  ;
\coordinate (v6) at (-0.25,-2)  ;

\draw[line width= 1.5]   (v4) -- (v5)node[midway] {$ -\;\;\;$};
\draw[line width= 1.5]   (v4) -- (v6)node[midway] {$\;\;\; -$};
\draw[line width= 1.5]   (v1) -- (v3)node[midway] {$\;\;\; +$} ;
\draw[line width= 1.5, dashed]  (v1) -- (v4);
\draw  (v1) -- (v4);
\draw[line width= 1.5] (v1) -- (v2)node[midway] {$\;\;\;\; -$};
\node[left] at (v4) {$\si$};
\node[above] at (v1) {$\di$};
\node[right] at (v2) {$\si''$};

\draw[line width = 0.05]   (-1.3,-1.4) ellipse (1.4 and 1.7);
\draw[line width = 0.05]   (0.45,1.2) ellipse (0.6 and 1.8);
\draw[line width = 0.05]   (2.5, -0.2) ellipse (1 and 0.8);
\end{tikzpicture}
\hspace*{1cm}
\begin{tikzpicture}[scale= 0.6]
\coordinate (v4) at (-1,0)  ;
\coordinate (v1) at (0.4,2)  ;
\coordinate (v3) at (0.5,0)  ;
\coordinate (v5) at (-1.75,-2)  ;
\coordinate (v6) at (-0.25,-2)  ;

\draw[line width= 1.5]   (v4) -- (v5)node[midway] {$ -\;\;\;$};
\draw[line width= 1.5]   (v4) -- (v6)node[midway] {$\;\;\; -$};
\draw[line width= 1.5]   (v1) -- (v3)node[midway] {$\;\;\; +$} ;
\draw[line width= 1.5, dashed]  (v1) -- (v4)node[midway] {$-\;\;\; $};
\draw  (v1) -- (v4);
\node[left] at (v4) {$\si''=\si$};
\node[above] at (v1) {$\di$};

\draw[line width = 0.05]   (-1.3,-1.4) ellipse (1.4 and 1.7);
\draw[line width = 0.05]   (0.45,1.2) ellipse (0.6 and 1.8);
\end{tikzpicture}
\caption{Possible configurations when shading $\{\si,\di\}$ (from left to right): $\di$ open, $\di$ well-connected and $\si''\neq \si$, $\di$ well-connected and $\si''= \si$. The circles illustrate the components in $C$.}\label{fig:lem4}
\end{figure}


\noindent\emph{Case 2}  We inserted and shaded $\{\si,\di\}$ and deleted an unshaded edge incident to the demand node $\di'$ (possibly with $\di'=\di$). \\\\
In $C$, $\si$ and $\di'$ are the unique open nodes of their components, because there is the edge $\{\si,\di\}$ left to insert at $\si$ and $\di'$ is incident to an unshaded edge. 
Observe that every supply node $\si'$ connected to $\di'$ in $\suc$ by an unshaded edge or a $-$edge is open in $\suc$. To see this, recall that a supply node is open if and only if it is not incident to a shaded $+$edge. Thus we have to show that there is no shaded $+$edge incident to $\si'$. $\{\si',\di'\}$ is no shaded $+$edge by assumption. Any other edge incident to $\si'$ in $C$ is an odd edge with respect to $\si$ and thus cannot be a shaded $+$edge by (SIN) for $\si$ in $C$. Thus $\si'$ is open in $C$ and, if $\si'\neq \si$,  $\si'$ is also open in $\suc$. \\
For the case $\si'=\si$, first observe that if $\{\si,\di'\}\in \suc$, then this is the edge we inserted: As the step from $C$ to $\suc$ deletes an edge incident to $\di'$ and inserts an edge incident to $\si$, $\{\si,\di'\}$ must be contained in the cycle that describes this step. We delete (decrease) another edge incident to $\di'$, so $\{\si,\di'\}$ must be increased in this step. But the increased edge incident to $\si$ is the edge we insert. Thus, if $\{\si,\di'\}\in \suc$, then $\{\si,\di'\}$ has to be the edge we inserted and if it is a shaded $-$edge, then $\si$ is open by our shading order (by condition 4: first $-$edges, then the unique $+$edge). Figure \ref{fig:case2} depicts the situations that may arise in Case 2. 
\\\\
First assume $\di'$ is open in $\suc$. Then $\di'$ is still incident to an unshaded edge $\{\si',\di'\}$ and $\si'$ is open by the above argument. Further,  $\si'$ satisfies (SIN) in $\suc$: The odd edge $\{\si',\di'\}$ is unshaded. The inserted edge $\{\si,\di\}$ is even for $\si'$. For all remaining edges, it follows from (SIN) for $\si$ in $C$ (note that after inserting $\{\si,\di\}$ and deleting an edge incident to $\di'$, odd (respectively even) edges for $\si'$ in $\suc$ are odd (respectively even) with respect to $\si$ in $C$; see also Figure \ref{fig:case2}). 

As $\di'$ stays open in $\suc$, the components are affected only if the inserted edge $\{\si,\di\}$ is well-connected in $\suc$. If $\{\si,\di\}$ is a $-$edge, then $\si$ is still open in $\suc$. In case $\di=\di'$, $\di$ is open by assumption. If $\di\neq\di'$, then $\di$ is open in $\suc$, as $\di$ is incident to an unshaded edge in $C$ by Lemma \ref{lem: not all +} and as $\di \neq \di'$,  $\di$ is still incident to this unshaded edge in $\suc$. Thus, in both cases, the components are not affected if $\{\si,\di\}$ is a $-$edge. 
If $\{\si,\di\}$  is a $+$edge, the step from $C$ to $\suc$ joins the components of $\si$ and $\di$ to a larger component whose unique open node is the one from $\di$'s component. In particular, (UNO) holds in $\suc$. 
\\\\
Now assume that the deletion of the edge incident to $\di'$ makes $\di'$ well-connected. If $\di'$ is only incident to shaded $+$edges in $\suc$, then $\suc$ is fully shaded: 
If we inserted the $+$edge $\{\si,\di'\}$, then we joined the components with open nodes $\si$ and $\di'$, respectively, to a larger component. As $\si$ and $\di'$ are well-connected in $\suc$, this component in $\suc$ does not have an open node. Otherwise, all shaded $+$edges incident to $\di'$ in $\suc$ are already well-connected in $C$. Thus the component with open node $\di'$ in $C$ does not change, but it does not have an open node in $\suc$. In either case, we reached $F$ with all edges shaded by Lemma \ref{lem: done}.

Thus, if $\di'$ is well-connected in $\suc$ and $\suc$ is not fully shaded, then $\di'$ is incident to its unique $-$edge $\{\si'',\di'\}$ in $\suc$ and the edge is shaded (note that $\si''=\si$ is possible). Recall that $\si''$ is open in $\suc$ and observe that it satisfies (SIN). This is because the odd edge $\{\si'',\di'\}$ is a $-$edge incident to a well-connected demand node. For all other odd edges the property follows just like for $\si'$ in the case where $\di'$ was open.


\begin{figure}
\center
\begin{tikzpicture}[scale=0.6]
\coordinate (v8) at (0,0)  ;
\coordinate (v6) at (1,2)  ;
\coordinate (v7) at (2,0)  ;
\coordinate (v4) at (3.5,2)  ;
\coordinate (v5) at (3,0)  ;
\coordinate (v3) at (4,0)  ;
\coordinate (v1) at (5,2)  ;
\coordinate (v2) at (6,0)  ;
\coordinate (v9) at (2,-2)  ;
\draw (v1) -- (v2);
\draw  (v1) -- (v3);
\draw  (v4) -- (v3);
\draw  (v4) -- (v5);
\draw (v6) -- (v7);
\draw  (v6) -- (v8);
\draw[ dashed]  (v8) -- (v9);
\draw  (v9) -- (v2);
\draw[dotted]  (v4) -- (v7);
\node[below left] at (v8) {$\si$};
\node[below] at (v9) {$\di$};
\node[above] at (v4) {$\di'$};
\node[below] at (v3) {$\si^.$};
\node[below] at (v5) {$\si^.$};
\draw[gray, snake=coil, segment aspect=0] (v8)--(v6);
\draw[gray, snake=coil, segment aspect=0] (v7)--(v4);
\draw[gray, snake=coil, segment aspect=0] (v3)--(v1);
\draw[gray, snake=coil, segment aspect=0] (v9)--(v2);
\end{tikzpicture}	\caption{Insertion of $\{\si,\di\}$ deletes the dotted edge incident to $\di'$ (wavy: odd edges for $\si$ in $C$). The two $\si^.$ may refer to $\si'$ or $\si''$.} \label{fig:case2}
\end{figure}

For (UNO), observe that $\di'$ and the incident $-$edge $\{\di',\si''\}$ became well-connected in this step. Therefore, we joined the components with open nodes $\di'$ and $\si''$, respectively, to a component with open node $\si''$. As before, the components containing $\si$ and $\di$, respectively, are joined in case $\{\si,\di\}$ is a $+$edge. Note that $\di=\di'$ is possible; then $\si''$ is the open node of the whole component.  This concludes Case $2$.  \hfill \qed

\end{proof}

\noindent Combining all our results, we obtain
\bigskip

\noindent \textit{\bf\em Proof of Theorem \ref{main thm}.}
We have to show that for any two vertices of a non-degenerate transportation polytope $\TP$, Algorithm \ref{algo:hirschwalk} 
finds a walk on the skeleton of $\TP$ of length at most $M+N-1-\crit$ connecting them, where $\crit$ is the number of critical pairs of $\TP$.

First, recall the conditions used as prerequisites for the application of both Lemma \ref{lem: delete unshaded} and \ref{lem: insert edge}: 
\begin{enumerate}
	\item (UNO) holds in $C$. 
	\item There are no shaded $+$edges incident to open supply nodes.
	\item $\sigma$ satisfies (SIN) in $C$. 
\end{enumerate}	
\noindent

Algorithm \ref{algo:hirschwalk} starts with an original tree $O$ in which all edges are unshaded. (UNO) is satisfied as every node forms its own component with exactly one open node (condition 1). As there are no shaded edges, condition 2 is satisfied trivially. Further, all supply nodes satisfy (SIN) (condition 3). Thus we may assume that we are at a partially shaded tree $C$ and have a supply node $\sigma$ such that all of the above conditions are satisfied. 

Clearly, Algorithm \ref{algo:hirschwalk} was designed precisely to choose  an edge $e$ to (insert and) shade following the rule indicated in Lemmas \ref{lem: delete unshaded} and \ref{lem: insert edge}, namely:
\begin{itemize}
\item[ ] If in the current tree $C$ there is a $-$edge incident to $\sigma$ still left to be (inserted and) shaded, then take edge $e$ to be one such edge. 
\\Otherwise, $e$ is picked to be the unique $+$edge incident to $\sigma$.
\end{itemize}

\noindent
Then, by Lemma \ref{lem: delete unshaded}, this step does not delete a shaded edge. Therefore, the succeeding tree $\suc$ satisfies condition 2.
If $\delta^*$ is well-connected in  $\suc$, then we reached the final tree $F$ with all edges shaded by Lemma \ref{lem: insert edge} (i). Thus, our termination criterion as stated in Algorithm \ref{algo:hirschwalk} is correct. Otherwise, by Lemma \ref{lem: insert edge} (ii), the succeeding tree $\suc$ satisfies (UNO) (condition 1) and contains a supply node satisfying (SIN) (condition 3). Such a supply node $\sigma'$ is found by Algorithm \ref{algo:newsupply}. We continue with another iteration, with $\suc$ and $\sigma'$ satisfying conditions 1 to 3.

We now show that the sequence produced by Algorithm \ref{algo:hirschwalk} has length at most $N_1+N_2-1-\crit$. As we shade every edge we insert and we never delete a shaded edge, every edge is inserted at most once. We only insert edges contained in $F$ and there are exactly $N_1+N_2-1$ such edges. However, the edges corresponding to the $\mu$ critical pairs exist in every tree. Therefore, Algorithm \ref{algo:hirschwalk} does not perform an insertion when shading them. Thus, we have at most $N_1+N_2-1-\crit$ steps along the skeleton of the transportation polytope. 

Finally, it remains to show that the faces of the polytope satisfy the Hirsch conjecture as well. To see this, note that a face of a transportation polytope is described by a set of edges that do not exist in any support graph of its vertices. Therefore, the walk constructed by Algorithm \ref{algo:hirschwalk} stays in the face (of minimum dimension) containing the two vertices corresponding to the trees $O$ and $F$, because the trees of the sequence only contain edges from $O\cup F$. Further, observe that when restricting to a face of a transportation polytope, the dimension of the face and its number of facets both are reduced by the same number $k$, which is the number of edges that are never used and thus the Hirsch bound does not change.

This completes the proof of Theorem \ref{main thm}. The Hirsch conjecture is true for all transportation polytopes and all their faces.
%
\hfill \qed
\bigskip

\noindent  {\bf Remark.\; } It is worth noticing that the walk we obtain from the initial tree $O$ to the final tree $F$ is not always necessarily a shortest path, but there are certainly transportation polytopes for
which this is the case. Moreover, the walk produced by Algorithm \ref{algo:hirschwalk} is not necessarily monotone with respect to a linear objective function, unlike the walks used by the simplex method. 

At the same time, while the  Hirsch conjecture for transportation polytopes is true and guarantees a short walk between vertices, it is known that there are \emph{long} monotone decreasing walks.  More precisely, for any $1/n>\alpha >0$, the objective function $c \cdot x=x_{1,1}+\alpha x_{1,2}+\dots+ \alpha^{N-1} x_{1,N}+\alpha^N x_{2,1}+\dots+\alpha^{N^2-1} x_{N,N}$
has a monotone decreasing sequence of vertices of the $N {\times} N$-Birkhoff polytope of length $cN!$ for a universal constant $c$ (see \cite{pak}).

A computational enumeration of all $N_1{\times}N_2$-transportation polytopes, for small values of $N_1$ and $N_2$, supports the conjecture that all integers between $1$ and $N_1+N_2-1$ are  diameters of some $N_1 \times N_2$ transportation polytope, but we have not verified this is the case. 

Finally, we remind the reader our arguments are only valid for polytopes. It is still possible that the diameter of the (bounded portion) of the $1$-skeleton of an unbounded network-flow polytope is longer than the Hirsch bound and we leave this as an interesting open question.

\section*{Acknowledgments}

The first author gratefully acknowledges support from the Alexander-von-Humboldt Foundation.
The second author is grateful for the support received through NSF grant DMS-1522158.
The second and third author gratefully acknowledge the support from the 
Hausdorff Research Institute for Mathematics (HIM) in Bonn.

\bibliographystyle {plain} 
\bibliography{literature}

\end{document}